\newcommand{\tc}{\textcolor{red}}
\newtheorem{thm}{Theorem}[section]
\newtheorem{lem}{Lemma}[section]
\newtheorem{cor}[lem]{Corollary}
\newtheorem{prop}[thm]{Proposition}
\newtheorem{rem}[thm]{Remark}
\numberwithin{equation}{section}
\newcommand{\abs}[1]{\left\vert#1\right\vert}
\newcommand{\E}{\mathbf{E}\,}
\newcommand{\R}{\mathbf{R}}
\newcommand{\re}{\mathrm{Re}\;\!}
\newcommand{\im}{\mathrm{Im}\;\!}
\newcommand{\Tr}{\mathrm{Tr}\;\!}
\newcommand{\lna}{l_{n,\alpha}}
\newenvironment{Proof of}{\removelastskip\par\medskip
\noindent{\em Proof of} \rm}{\penalty-20\null\hfill$\square$\par\medbreak}
\def\be{\begin{equation}}
\def\en{\end{equation}}
\def\bee{\begin{eqnarray*}}
\def\ene{\end{eqnarray*}}
\def\E{{\bf E}}
\def\s{\sigma_{n-1}}
\def\R{{\mathbb R}}
\def\Tr{{\rm Tr}\,}
\def\<{\left<}
\def\>{\right>}
\def\1{{\bf 1}}
\def\4{\kern1pt}
\begin{document}

\vspace{1in}

\title
{\bf On the Rate of Convergence to the  Marchenko--Pastur Distribution }

\vspace{2in}

\author{{\bf F. G\"otze}
\\{\small Faculty of Mathematics}
\\{\small University of Bielefeld}\\{\small Germany}
\and {\bf A. Tikhomirov}\\{\small Department of Mathematics
}\\{\small Komi Research Center of Ural Division of RAS,}\\{\small Syktyvkar state University}
\\{\small Syktyvkar, Russia}}
\date{}
\maketitle

\bibliographystyle{plain}

\maketitle

\footnote{ Partially supported by RFBR, grant  N~14-01-00500. 
Partially supported by the Program of UD of RAS N 12-P-1-1013
Partially supported by CRC 701 ``Spectral Structures and Topological
Methods in Mathematics'', Bielefeld.
}
\footnote{\hskip-6mm Key words and phrases: spectral distribution
function, Wigner's theorem, Marchenko--Pastur distribution. }

\begin{abstract}
\hskip-6mm Let $\mathbf X=(X_{jk})$ denote an  $n\times p$ random matrix with
entries $X_{jk}$, which are independent for $1\le j\le n,1\le  k\le p$. We consider
the rate of convergence of the empirical  spectral distribution function of the
matrix $\mathbf W=\frac1p\mathbf X\mathbf X^*$ to the Marchenko--Pastur law.
We assume that $\mathbf E X_{jk}=0$,
$\E X_{jk}^2=1$ and that the distributions of the matrix elements $X_{jk}$ have a uniformly
sub exponential decay in the sense that there exists a constant $\varkappa>0$
such that for any $n,p\ge 1$ and  $1\le j \le n,\,1\le k\le p $ and any $t\ge 1$ we have\vspace{0.05in}
\newline$\qquad\qquad\qquad\qquad\qquad
\mathbf {Pr}\{|X_{jk}|>t\}\le \varkappa^{-1}\text{\rm exp}\{-t^{\varkappa}\}.
$\vspace{0.05in}\newline
By means of a recursion argument it is shown that the  Kolmogorov distance between the empirical spectral distribution of the sample covariance matrix $\mathbf W$
and the Marchenko--Pastur distribution is  of order $O(n^{-1}\log^{4+\frac4{\varkappa}} n)$  with high probability.
\end{abstract}


\section{Introduction}
\setcounter{equation}{0}

For any $n,p\ge1$, consider a family
of independent random variables $\{X_{jk},\, 1 \leq j \leq n, 1\leq k \leq p\}$, defined on some probability space
$(\Omega,{\textfrak M},\Pr)$. Let $\mathbf X=(X_{jk})$ be a matrix of order $n\times p$ and
let  $\mathbf W=\frac1p\mathbf X\mathbf X^*$. Denote by
$\{s_1^2,\dots,s_n^2\}$  the eigenvalues of the matrix $\mathbf W$ and introduce the
associated spectral distribution function
$$
\mathcal F_{n}(x) = \frac{1}{n}\ {\rm card}\,\{j \leq n: s_j^2 \leq
x\}, \quad x \in \R.
$$
Averaging over the random values $X_{ij}(\omega)$, define the expected
(non-random) empirical distribution functions $F_{n}(x) = \E\,\mathcal F_{n}(x)$.
We assume that $p=p(n)$ and $\lim_{n\to\infty}\frac np=y\in(0,\infty)$. Without loss of generality we shall assume that $y\in(0,1]$.
Let $G_y(x)$ denote the Marchenko--Pastur distribution function with density
$g_y(x)=G_y'(x)=\frac1{2yx\pi}\sqrt{(x-a)(b-x)}I_{[a,b]}(x)$, where $\mathbb I_{[a,b]}(x)$
denotes the  indicator--function of the interval $[a,b]$, $a=(1-\sqrt y)^2$, $b=(1+\sqrt y)^2$. We shall study the
rate of convergence
 $\mathcal F_{n}(x)$ to the Marchenko--Pastur law assuming that
\begin{equation}\label{exptails}
\Pr\{|X_{jk}|>t\}\le \varkappa^{-1}\exp\{-t^{\varkappa}\},
\end{equation}
 for some $\varkappa>0$ and any $t\ge1$.
 The rate of convergence to the Marchenko--Pastur law  has been  studied by several authors.
 In particular, the present authors
 proved in \cite{GT:04} that the Kolmogorov distance between $\mathcal F_n(x)$ and  the distribution function
  $G_y(x)$,
 $\Delta_n^*:=\sup_x|\mathcal F_n(x)-G_y(x)|$ is of order
 $O_P(n^{-\frac12})$.
Bai et al.  showed in \cite{Bai:03} that
 $\Delta_n:=\sup_x| F_n(x)-G_y(x)|=O(n^{-\frac12})$.
  For the Laguerre Unitary Ensemble G\"otze and Tikhomirov proved in \cite{GT:05}
that $\Delta_n=O(n^{-1})$.

Let $y=\frac np\in(0,1]$ in  what follows.
For any positive constants $\alpha>0$  and $\varkappa>0$ define the quantities
\begin{equation}\label{beta}
 \lna :=\log n\4(\log \log n)^{\alpha}
\quad \text{and} \quad
 \beta_n:=(\lna)^{\frac1{\varkappa}+1}.
\end{equation}
The main result of this paper is the following
\begin{thm}\label{main} Let $\E X_{jk}=0$, $\E X_{jk}^2=1$ and
there exists a constant $\varkappa>0$ such that for all $n,p\ge1$ any $1\le j\le n$ and $1\le k\le p$ and any $t\ge 1$,
condition \eqref{exptails} holds.
 Then for any $\alpha>0$ there exist a positive constants $C$ and $c$, depending on $\varkappa$, $\alpha$ and $y$ such that
\begin{equation}\label{kolmog}
\Pr\{\sup_x|\mathcal F_n(x)-G_y(x)|> n^{-1}\beta_n^4\}\le C\exp\{-c\4\lna\}.
\end{equation}
\end{thm}
\begin{rem}\label{localization} Let $\gamma_{nj}$ denote quantiles of order $\frac jn$ of distribution $G_y(x)$, $G_y(\gamma_{nj})=\frac jn.$ 
 Inequality \eqref{kolmog} implies that
 \begin{align}
\Pr\Bigl\{\, \exists\,\,j\in[c\beta_n^4,n-c\beta_n^4]:\,|s_j^2-\gamma_{nj}|\ge C\beta_n^4
\Big[\min\{j,N-j+1\}\Big]^{-\frac13}n^{-\frac23} \Bigr\} \notag\\ \le C\exp\{-c\lna\}.
\end{align}
\end{rem}
We apply the result of Theorem \ref{main} to investigation of eigenvectors of the matrix $\mathbf W$.
 Let $\mathbf u_j=(u_{j1},\ldots,u_{jn})^T$  be eigenvector of the matrix
$\mathbf W$ corresponding to eigenvalue $s_j^2$, $j=1,\ldots,n$.
We prove the following result.
\begin{thm}\label{eigenvector}Under the conditions of Theorem \ref{main} for any $\alpha>0$ 
there exist constants $C$, $c$, depending on $\varkappa$, 
$\alpha$ and $y$
such that
\begin{equation}\label{deloc}
 \Pr\{\max_{1\le j,k\le n}|u_{jk}|^2>\frac{\beta_n^4}n\}\le C\exp\{-c\4\lna\}
\end{equation}
and
\begin{equation}\label{deloc1}
\Pr\{\max_{1\le k\le n}\Big|\sum_{\nu=1}^k|u_{j\nu}|^2-\frac kn\Big|>\frac{\beta_n^2}{\sqrt n}\}\le C\exp\{-c\lna\}.
\end{equation}

\end{thm}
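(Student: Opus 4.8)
The plan is to deduce the delocalization bounds from the rate of convergence in Theorem 1.1 via the standard resolvent comparison: if $m_n(z)=\frac1n\Tr(\mathbf W-z)^{-1}$ denotes the Stieltjes transform of $\mathcal F_n$ and $s_y(z)$ that of $G_y$, then control of $\sup_x|\mathcal F_n(x)-G_y(x)|$ on the scale $n^{-1}\beta_n^2$ translates, together with the local structure of $G_y$ away from the spectral edges, into a local law $|m_n(z)-s_y(z)| \les C\beta_n^2/(n\,\eta)$ for $z=E+i\eta$ with $\eta$ of order $\beta_n^2/n$ up to logarithmic factors. The key observation for the eigenvectors is the identity
\begin{equation}
\im\, (\mathbf W - z)^{-1}_{kk} \;=\; \eta\sum_{j=1}^n \frac{|u_{jk}|^2}{(s_j^2-E)^2+\eta^2},
\end{equation}
so that for any fixed $k$ and any eigenvalue $s_j^2$, choosing $E=s_j^2$ gives $|u_{jk}|^2 \les \eta\, \im\, (\mathbf W - z)^{-1}_{kk}$. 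Hence a bound of the form $\im\,(\mathbf W-z)^{-1}_{kk}\les C$ uniformly in $k$ and in $z$ with $\im z = \eta \asymp \beta_n^2/n$, holding with probability at least $1-C\exp\{-c\,\lna\}$, immediately yields $\max_{j,k}|u_{jk}|^2 \les C\beta_n^2\cdot(\beta_n^2/n) = C\beta_n^4/n$, which is \eqref{deloc}.

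Carrying this out requires passing from the trace $m_n(z)$ (controlled via Theorem 1.1) to the individual diagonal resolvent entries $(\mathbf W-z)^{-1}_{kk}$. First I would fix $z=E+i\eta$ with $\eta = \beta_n^2/(n)$ (possibly adjusted by a $\log n$ factor), and establish a self-consistent equation for $R_{kk}(z):=(\mathbf W-z)^{-1}_{kk}$ by the Schur complement / leave-one-row-out argument: writing $\mathbf X_k$ for the $k$-th row of $\mathbf X$, one gets $R_{kk}^{-1} = -z - \frac1p z\, \mathbf X_k^*(\mathbf W^{(k)}-z)^{-1}\mathbf X_k + \text{(correction)}$, where $\mathbf W^{(k)}$ is the minor. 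The quadratic form concentrates around $\frac1p z\,\Tr(\mathbf W^{(k)}-z)^{-1}\approx z\,s_y(z)$; the fluctuation of $\mathbf X_k^*(\mathbf W^{(k)}-z)^{-1}\mathbf X_k$ around its mean is controlled by a large-deviation bound for quadratic forms in the sub-exponential variables (using \eqref{exptails} and the truncation at level $\beta_n$ that underlies the proof of Theorem 1.1), and the difference between $\Tr(\mathbf W^{(k)}-z)^{-1}$ and $\Tr(\mathbf W-z)^{-1}=n\,m_n(z)$ is $O(1/\eta)$ by interlacing. Feeding in $|m_n(z)-s_y(z)|$ small from Theorem 1.1 and stability of the self-consistent equation (away from the edges, where $|s_y'|$ and $1+ \text{derivative terms}$ are bounded), one concludes $|R_{kk}(z)-s_y(z)|$ is small, hence $\im R_{kk}(z) = O(1)$ uniformly in $k$, on the relevant event.

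The main obstacle, and the part needing the most care, is the edge behavior: near $a=(1-\sqrt y)^2$ and $b=(1+\sqrt y)^2$ the density $g_y$ vanishes like a square root, the stability constant of the self-consistent equation degrades, and $\im s_y(E+i\eta)$ is only of order $\sqrt{\kappa+\eta}$ with $\kappa$ the distance to the edge; one must show there are no eigenvalues in the regime where the above argument would be too weak, which is where the full strength of Theorem 1.1 (a bound valid uniformly in $x$, including the edges) is used to rule out eigenvalues more than $\beta_n^2/n$ outside $[a,b]$ and to keep $\eta$ above the relevant scale. For \eqref{deloc1}, the plan is different: write $\sum_{\nu=1}^k|u_{j\nu}|^2$ for a \emph{fixed} index $j$ as a sum over coordinates, note that the counting function $\sum_{\nu=1}^k|u_{j\nu}|^2$ compared to $k/n$ is governed by how uniformly the spectral measure at coordinate $\nu$ (namely $\sum_j |u_{j\nu}|^2\delta_{s_j^2}$) resembles the global one; equivalently, one bounds $\max_k|\frac1{\ }\sum_{\nu\le k}(R_{\nu\nu}(z)-m_n(z))|$ and integrates over an appropriate contour, using that the diagonal entries are all close to the common value $s_y(z)$ with the errors being essentially independent across $\nu$, so that a partial-sum (martingale / Bernstein) bound gives the $\beta_n^2/\sqrt n$ fluctuation with the stated exponential probability. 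I expect the diagonal-entry local law of the previous paragraph to be the technical heart, with both \eqref{deloc} and \eqref{deloc1} following from it by elementary manipulations.
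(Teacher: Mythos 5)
Your route to \eqref{deloc} is essentially the paper's: both use the inequality $\max_k|u_{jk}|^2\le 2\,\eta\,\sup_u\im R_{jj}(u+i\eta)$ (the paper phrases it via a L\'evy-type concentration bound $Q_{nj}(\lambda)\le 2\lambda\sup_u\im R_{jj}(u+i\lambda)$ for the weighted spectral measure $F_{nj}$ whose Stieltjes transform is $R_{jj}$), and both reduce the problem to a uniform $O(1)$ bound on $\im R_{jj}$ at the smallest admissible imaginary part. The paper obtains this from the representation \eqref{repr01}, $R_{jj}=-1/(z+ym_n(z)+\tfrac{y-1}{z}-\varepsilon_j)$, together with the large-deviation bounds on $\varepsilon_j$ from Section~3 and the control of $m_n$ from Proposition~\ref{bound}; your Schur-complement / leave-one-out self-consistent equation for $(\mathbf W-z)^{-1}_{kk}$ is the same mechanism. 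Two small discrepancies: the paper works with the Hermitized $(n+p)\times(n+p)$ matrix $\mathbf V$ throughout, which bundles left and right singular vectors into one argument, and the relevant scale is $\eta\asymp\beta_n^4/n$, not $\beta_n^2/n$; the fluctuation terms $\varepsilon_{j1},\varepsilon_{j2}$ are of size $\beta_n^2/\sqrt{nv}$, so one must take $v\ge v_0=d\,y\,\beta_n^4/n$ (cf.\ \eqref{v0}) to make them order one, and that is what produces $\beta_n^4/n$ in \eqref{deloc}, not $\beta_n^2/n$.

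For \eqref{deloc1} your plan diverges from the paper's, and I think the paper's is simpler and avoids a real difficulty in yours. You propose to control $\max_k\bigl|\sum_{\nu\le k}(R_{\nu\nu}(z)-m_n(z))\bigr|$ via a martingale/Bernstein bound across coordinates and then recover partial sums of $|u_{j\nu}|^2$ by contour integration around individual eigenvalues. Two obstacles: the $R_{\nu\nu}$ are strongly dependent across $\nu$ (they share the same matrix), so the "essentially independent" step needs justification, and a contour integral isolating a single eigenvalue $s_j^2$ at scale $\eta\asymp\beta_n^4/n$ is delicate when neighboring gaps can be comparable to $\eta$. The paper sidesteps both issues: it fixes a single diagonal index $j$, observes that $R_{jj}(z)$ is exactly the Stieltjes transform of the weighted spectral distribution function $F_{nj}(x)=\sum_k|Z_{kj}|^2\,\mathbb I\{\lambda_k\le x\}$, bounds $r_j:=R_{jj}-S_y$ by the same decomposition $r_j=-S_y\,g_n/b_n+\varepsilon_jR_{jj}/b_n$ used for $m_n$, and then runs the smoothing/Cauchy argument of Corollary~\ref{smoothing1} on $F_{nj}$ verbatim. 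This yields $\sup_x|F_{nj}(x)-G_y(x)|\le C\beta_n^2/\sqrt n$ with the stated probability; combined with the already-established bound on $\sup_x|F_n(x)-G_y(x)|$ (Theorem~\ref{main}), it gives $\sup_x|F_{nj}(x)-F_n(x)|\le C\beta_n^2/\sqrt n$, and this supremum is identically the partial-sum discrepancy $\max_k|\sum_{l\le k}|u_{lj}|^2-k/n|$. So no independence across $\nu$, no contour around a single eigenvalue, and no extra large-deviation input beyond what Sections~3--6 already provide. You should adopt this reuse of the Stieltjes-transform Kolmogorov bound for the single-coordinate measure $F_{nj}$; it is the step your sketch is missing.
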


We shall investigate as well the Stieltjes transform of distribution of singular values of sample covariance matrices.
We shall use the following symmetrization of one-sided distribution function. Let $F(x)$ be the distribution function of non-negative random variable. Define its symmetrization as
\begin{equation}
\widetilde F(x)=\frac{1+\text{\rm sign}xF(x^2)}2.
\end{equation}
Let $s_F(z)=\int_0^{\infty}\frac1{x-z}dF(x)$ and $s_{\widetilde F}(z)=\int_{-\infty}^{\infty}\frac1{x-z}d\widetilde F(x)$
be Stieltjes transforms of distribution functions $F(x)$ and $\widetilde F(x)$ respectively.
It is straightforward to check that 
$$
\widetilde s(z)=zs(z^2).
$$

Note that symmetrization of Marchenko -- Pastur distribution function $\widetilde G_y(x)$ has the density
\begin{equation}\label{march}
\widetilde G'_y(x)=\frac1{2\pi x}\sqrt{(x^2-a)(b-x^2)}\mathbb I\{\sqrt a\le |x|\le \sqrt b\},
\end{equation}
and Stieltjes transform $S_y(z)$, satisfying the equation
\begin{equation}\label{reprst}
yS_y^2(z)+(z+\frac{y-1}z)S_y(z)+1.
\end{equation}
See Section 3 in \cite{GT:09}.
Let $m_n(z)$ and $\widetilde m_n(z)$ denote the Stiletjes transform of distribution function $F_n(x)$ and $\widetilde F_n(x)$ respectively.
Let 
\begin{equation}
\Lambda_n=\widetilde m_n(z)-S_y(z).
\end{equation}
Put the following quantities. For any $z=u+iv\in\mathbb C_+$ we define the function
$$
\gamma(z)=\min\{|1-\sqrt y-|u||,|1+\sqrt y-|u||\}.
$$
Let 
\begin{equation}\label{v0}
 v_0=A_0n^{-1}\beta_n^4
\end{equation}
with some positive constant $A_0$  which  will be chosen later. 
For any $\sqrt y>\varepsilon>0$, $V>v_0$ we introduce the region on the complex plane 
$$\mathbb G=\mathbb G(A_0,b,\varepsilon)=\{z=u+iv:\4 1-\sqrt y+\varepsilon\le |u|\le1+\sqrt y-\varepsilon,
v_0/\gamma(z)\le v\le V.\}$$ 
We prove the following estimations of $\Lambda_n$.
\begin{thm}\label{stlambda1}Under the conditions of Theorem \ref{main} there exist constants $C,c$ depending on $\varkappa$, $y$ and $\alpha$  such that for any $z=u+iV$
with $V=4\sqrt y$,
\begin{equation}
  \Pr\{|\Lambda_n(z)|\ge \frac{C\beta_n^3|S_y(z)|^{\frac32}}{n}\}\le \exp\{-c\lna\}.
 \end{equation}
\end{thm}
\begin{thm}\label{stlambda}Under the conditions of Theorem \ref{main} there exist constants $C,c$ depending on $\varkappa$, $y$ and $\alpha$ such that for any $z\in\mathbb G$
\begin{equation}
  \Pr\{|\Lambda_n(z)|\ge \frac{C\beta_n^3}{nv}\}\le \exp\{-c\lna\}.
 \end{equation}
\end{thm}

Similar results as in Theorems \ref{main} and \ref{eigenvector} for the sample covariance matrices with $y=1$ and assuming \eqref{exptails} with $\varkappa=2$
(sub-gaussian r.v.'s) were obtained recently in \cite{SchleinMaltsev}. Assuming that the entries of random matrix $\mathbf X$ have finite moments of all orders, Erd\"os and 
coauthors  in \cite{ErdosKnowles} proved the bound of the rate of convergence to the Marchenko--Pastur law of order $O(n^{-1+\varepsilon})$ for any $\varepsilon>0$.

The proof of Theorem \ref{main} is based on the estimation of the Kolmogorov distance between   
any distribution and a symmetrizing Marchenko--Pastur distribution  via their  Stieltjes transforms and on the Theorem \ref{stlambda}. 
We start by showing an improved bound for the Kolmogorov distance between   
any distribution and a symmetrizing Marchenko--Pastur distribution  using Stieltjes transforms. 
Compared  to a similar inequality in \cite{BGT:2010} 
we shall use here the 
vanishing of the  density at  the end points of support of the Marchenko--Pastur distribution. 
The  most  of the paper is devoted to the proof of Theorem \ref{stlambda}. To prove it we analyze the behavior of error terms in the representation \eqref{repr01}
of diagonal entries of the resolvent matrix.
We use here  a relatively short recursion argument based on the approach
developed in  \cite{GT:09}  and \cite{GT:11}   and  ideas similar to those used in  
Erd\"os, Yau and Yin \cite{ErdosYauYin:2010a}, Lemma  3.4.
This approach is based on the classical resolvent recursion
between random matrices of dimension $n$ an $n-1$, where
the error terms are stochastically bounded by means of
large deviation probability bounds for martingales.
Starting at fixed distance from the real line, where the Stieltjes-transforms are trivially bounded, we 
iterate these bounds using estimates for their derivatives
to a point nearer to the real line by a step of size $n^{-b}$. Assuming exponential moment conditions this recursion
of bounds for the  Stieltjes-transform  \eqref{repr03} can be iterated (approximately $n^b$-times) till reaching a distance
$n^{-1}(\log n)^{O(1)}$ to the real line, which finally
provides the final error bound.  
\newpage
\tableofcontents
\titlecontents{section}[1.5em]{\addvspace{1pt}}{\contentslabel{1.5em}}{}{\titlerule*[0.72pc]{.}\contentspage}
\newpage

\section{Kolmogorov distance between any distribution function and the symmetrizing 
Marchenko--Pastur distribution function}
To bound $\Delta_n^*$ we shall use an approach developed in
G\"otze and Tikhomirov \cite{GT:09} and \cite{GT:04}. We modify a bound for the  
Kolmogorov distance between
distribution functions based on their Stieltjes transforms obtained in \cite{GT:05}, Lemma 2.1.
Recall that $\widetilde  G_y(x)$ denotes the symmetrization of the distribution function $G_y(x)$ defined by the equality
\begin{equation}\label{symmetr}
\widetilde  G_y(x)=\frac{1+\text{\rm sign}\4x\4G_y(x^2)}2,
\end{equation}
For $y=1$ the distribution function $\widetilde  G_y(x)$ is the distribution function of 
the semi-circular law.
 Given $\frac{\sqrt y}2\ge\varepsilon>0$ introduce the interval $\mathbb J_{\varepsilon}=
[1-\sqrt y+\varepsilon,1+\sqrt y-\varepsilon]$ and
$\mathbb J'_{\varepsilon}=
[1-\sqrt y+\frac12\varepsilon,1+\sqrt y-\frac12\varepsilon]$.
For any $x$ such that $|x|\in[1-\sqrt y,1+\sqrt y]$, define
$\gamma=\gamma(x):=\min\{|x|-1+\sqrt y,1+\sqrt y-|x|\}$. Note that $0\le\gamma\le\sqrt y$.
For any $x:\,|x|\in\mathbb J_{\varepsilon}$, we have $\gamma\ge\varepsilon$, respectively, 
for any $x:\,|x|\in\mathbb J_{\varepsilon}'$,
 we have $\gamma\ge\frac12\varepsilon$.
For a  distribution function $F$ denote by $S_F(z)$ its Stieltjes transform,
$$
S_F(z)=\int_{-\infty}^{\infty}\frac1{x-z}dF(x).
$$
\begin{prop}\label{smoothing}Let $v>0$ and $H>0$ and $\varepsilon>0$ be positive numbers such 
that
\begin{equation}
 \tau=\frac1{\pi}\int_{|u|\le H}\frac1{u^2+1}du=\frac34,
\end{equation}
and
\begin{equation}\label{avcond}
 2vH\le \varepsilon^{\frac32}.
\end{equation}
If $\widetilde G_y$ denotes the distribution function of the symmetrizing (as in \eqref{symmetr})
Marchenko--Pastur law, and $F$ is any distribution function,
 there exist some absolute constants $C_1$, $C_2$, $C_3$ depending on $y$ only such that
\begin{align}
\Delta(F,\widetilde G_y)&:= \sup_{x}|F(x)-\widetilde G_y(x)|\notag\\&\le
2\sup_{x:|x|\in\mathbb J'_{\varepsilon}}\Big|\im\int_{-\infty}^x(S_F(u+i\frac v{\sqrt{\gamma}})
-S_{\widetilde G_y}(u+i\frac v{\sqrt{\gamma}}))du\Big|+C_1v
+C_2\varepsilon^{\frac32}
\end{align}
with $C_1=\begin{cases}\frac {2H^2\sqrt 3}{\pi^2\sqrt{y(1-\sqrt y)}}&\text{ if }0<y<1,\\ 
\frac{H^2}{\pi}&\text{ if }y=1,\end{cases}$ and 
$C_2=\begin{cases}\frac {4}{\pi \sqrt{y(1-\sqrt y)}}&\text{ if }0<y<1,\\ \frac1{\pi}
&\text{ if }y=1.\end{cases}$.

\end{prop}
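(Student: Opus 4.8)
The plan is to adapt the classical Stieltjes-transform smoothing inequality (of Bai/Götze–Tikhomirov type) but with the key twist that the imaginary part of the spectral parameter is allowed to shrink like $v/\sqrt{\gamma}$ near the spectral edges, rather than being held at a fixed level $v$. First I would pass to the symmetrized picture: since $\widetilde G_y$ in \eqref{symmetr} is obtained from $G_y$ by the change of variables $x\mapsto x^2$, one has $\Delta(F,\widetilde G_y)=\Delta$ for the symmetrized empirical law, and it suffices to prove the stated bound for symmetric distribution functions on $\R$. The density formula \eqref{symmetr1} then shows $\widetilde G_y$ is a compactly supported density on $[-b,-a]\cup[a,b]$ which, crucially, behaves like a square-root near each of the four edges $\pm a,\pm b$; the quantity $\gamma(x)=\min\{|x|-1+\sqrt y,\,1+\sqrt y-|x|\}$ measures distance to the edge set $\{|x|=1\pm\sqrt y\}$ (note $a=1-\sqrt y$ refers here to the symmetrized edges in the sense of \eqref{symmetr1}, i.e. $1-\sqrt y$ is a zero of the density when $y<1$, and $1+\sqrt y$ always is).

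The core of the argument is the usual two-step route. Step one: for any fixed $V>0$ write, via the Cauchy–Poisson kernel,
\begin{equation}
(F*\mathcal{C}_V)(x)-(\widetilde G_y*\mathcal{C}_V)(x)=\frac1\pi\int_{-\infty}^{\infty}\frac{V\,(F(x-t)-\widetilde G_y(x-t))}{t^2+V^2}\,dt,
\end{equation}
and bound the Kolmogorov distance $\Delta(F,\widetilde G_y)$ by $2$ times the sup of the left-hand side plus the smoothing error coming from the fact that $\widetilde G_y$ has bounded density: this is where the truncation at $|u|\le H$ with $\tau=3/4$ enters, producing the absolute constant $C_1$ and a first error term proportional to $v$ times the sup of the density of $\widetilde G_y$. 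Step two: express the smoothed difference through an integral of Stieltjes transforms,
\begin{equation}
(F*\mathcal{C}_V)(x)-(\widetilde G_y*\mathcal{C}_y)(x)=\frac1\pi\,\im\int_{-\infty}^{x}\big(S_F(u+iV)-S_{\widetilde G_y}(u+iV)\big)\,du,
\end{equation}
so that up to constants the distance is controlled by the right-hand side of the asserted inequality once we substitute the edge-dependent height $V=v/\sqrt{\gamma(u)}$. The remaining ingredient is the tail estimate that lets one restrict the supremum over $x$ to the shrunk intervals $\mathbb{J}'_\varepsilon$: for $x$ within $\varepsilon$ of an edge one simply uses the crude bound $|F(x)-\widetilde G_y(x)|\le \sup_{|x-x'|\le c\varepsilon}(\widetilde G_y(x)-\widetilde G_y(x'))+\Delta(F,\widetilde G_y)\!\restriction_{\mathbb J'_\varepsilon}$ together with $\widetilde G_y'(x)=O(\sqrt{|x\mp a|})$, which gives the $C_2\varepsilon^{3/2}$ term; and the condition \eqref{avcond}, $2vH\le\varepsilon^{3/2}$, is exactly what makes the shrinking height $v/\sqrt\gamma\le v/\sqrt\varepsilon$ still small enough on $\mathbb{J}'_\varepsilon$ that the smoothing step one is legitimate there.

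The main obstacle I anticipate is bookkeeping the variable height $V=v/\sqrt{\gamma(u)}$ consistently through both steps: the smoothing inequality is cleanest for a constant $V$, so one must localize $x$ to a neighborhood of a single edge, apply the fixed-height inequality with $V$ equal to the value appropriate to that edge, and then patch the $O(1/\sqrt\gamma)$ many neighborhoods together without losing the claimed form of the constants — and one must check that enlarging $V$ only worsens the density-smoothing error by a controlled amount because $\widetilde G_y$, being square-root-type, actually has small mass in any $V$-window near an edge (the local density there is $O(\sqrt{V})$, so the product $V\cdot(\text{density})=O(V^{3/2})$, matching $\varepsilon^{3/2}$). The explicit piecewise values of $C_1,C_2$ for $0<y<1$ versus $y=1$ come from the fact that at $y=1$ the inner edges $1-\sqrt y=0$ collide and $\widetilde G_y$ becomes the semicircle law (square-root behavior only at $\pm 2$), whereas for $y<1$ one carries the extra factor $(y(1-\sqrt y))^{-1/2}$ governing the density near the inner edge $|x|=1-\sqrt y$; tracking these factors is routine but is where the stated constants are pinned down.
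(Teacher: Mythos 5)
Your overall plan matches the paper's proof almost step for step: first reduce $\sup_x$ to $\sup_{|x|\in\mathbb J'_\varepsilon}$ at a cost of $C_2\varepsilon^{3/2}$ via the square-root density bound near the edges (Lemma \ref{densitymp}/\ref{supremum}); then at each fixed $x$ expand the horizontal Stieltjes-transform integral as a Poisson convolution, split at $|t|\le H$ using $\tau=3/4$, and bound the mass of $\widetilde G_y$ in a $v'H$-window by $O(v\sqrt{\gamma+\varepsilon}/\sqrt{\gamma})=O(v)$ to get $C_1v$. The use of \eqref{avcond} to keep shifted points inside $\mathbb J'_\varepsilon$ is also exactly right.

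The one genuine issue is the ``main obstacle'' you flag at the end. You write the edge-dependent height as $V=v/\sqrt{\gamma(u)}$ — with $u$ the integration variable — and then worry about having to ``localize $x$ to a neighborhood of a single edge'' and ``patch the $O(1/\sqrt\gamma)$ many neighborhoods together.'' This misreads the statement: in
\[
\sup_{x:|x|\in\mathbb J'_\varepsilon}\Bigl|\im\int_{-\infty}^{x}\bigl(S_F(u+i\tfrac{v}{\sqrt\gamma})-S_{\widetilde G_y}(u+i\tfrac{v}{\sqrt\gamma})\bigr)\,du\Bigr|,
\]
the quantity $\gamma=\gamma(x)$ is evaluated at the \emph{endpoint} $x$, not at the running variable $u$. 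The imaginary part of the argument is therefore constant along each horizontal integral, and varies only through the outer supremum over $x$. Consequently there is nothing to patch: one fixes $x$ (ultimately the near-maximizer of $|F-\widetilde G_y|$ on $\mathbb J_\varepsilon$), sets $v'=v/\sqrt{\gamma(x)}$, and runs the fixed-height smoothing argument verbatim with that single $v'$. The obstacle you anticipate does not arise, and the localization/patching machinery you sketch is not only unnecessary but would lose the clean explicit constants $C_1,C_2$. Beyond this misconception, the proposal is on the correct track and essentially reproduces the paper's argument.
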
	
\begin{rem}
 \begin{equation}
  H=\text{\rm tg}\frac{3\pi}8=1+\sqrt 2.
 \end{equation}
 \begin{proof}
 For a proof of this remark see the Appendix, Subsection \ref{profofsmoothing}.
 \end{proof}

\end{rem}

\begin{cor}\label{Cauchy}
 Under the conditions  of Proposition \ref{smoothing}, for any $V>v\sqrt2/\sqrt{\varepsilon}$, the following 
inequality holds
\begin{align}
 \sup_{x\in\mathbb J'_{\varepsilon}}&\left|\int_{-\infty}^x(\im(S_F(u+iv')
-S_{G_y}(u+iv'))du\right|\notag\\&\le
\int_{-\infty}^{\infty}|S_F(u+iV)-S_{G_y}(u+iV)|du\notag\\&+
\sup_{x\in\mathbb J'_{\varepsilon}}\left|\int_{v'}^V\left(S_F(x+iu)-S_{G_y}(x+iu)\right)du
\right|,
\end{align}
where $\gamma=\min\{|x|-1+\sqrt y,\,1+\sqrt y-|x|\}$ and $v'=\frac v{\sqrt{\gamma}}\le V$, for $x\in\mathbb J'_{\varepsilon}$.
\end{cor}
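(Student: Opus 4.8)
The plan is to exploit the fact that the integrand $S_F(z)-S_{G_y}(z)$ is analytic in the upper half-plane and to move the contour of integration vertically upward from height $v'$ to height $V$. First I would fix $x\in\mathbb J'_\varepsilon$ and rewrite the quantity on the left-hand side by noting that $\im\int_{-\infty}^x(\cdots)du$ is, up to sign, the real part of a contour integral along the horizontal ray $\{u+iv':u\le x\}$; the key point is that for fixed $u$ the difference $g(z):=S_F(z)-S_{G_y}(z)$ decays like $O(1/|z|^2)$ as $\re z\to-\infty$ (both Stieltjes transforms behave like $-1/z$ to leading order, so the $1/z$ terms cancel), which makes all the integrals below absolutely convergent and justifies the manipulations.

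Next I would apply Cauchy's theorem to the function $g$ on the rectangular contour with corners $-R+iv'$, $x+iv'$, $x+iV$, $-R+iV$, and let $R\to\infty$. Since $g$ is analytic in the strip $v'\le \im z\le V$ (the singularities of both Stieltjes transforms lie on the real axis, and $v'>0$), the contour integral vanishes. The contribution of the left vertical side at $\re z=-R$ tends to $0$ as $R\to\infty$ because of the $O(R^{-2})$ decay just noted. This yields the identity
\begin{align}
\int_{-\infty}^x g(u+iv')\,du
&=\int_{-\infty}^x g(u+iV)\,du
+i\int_{v'}^V g(x+iu)\,du. \notag
\end{align}
Taking imaginary parts and then absolute values on the left gives
\begin{align}
\Big|\int_{-\infty}^x \im g(u+iv')\,du\Big|
&\le \Big|\int_{-\infty}^x \im g(u+iV)\,du\Big|
+\Big|\int_{v'}^V g(x+iu)\,du\Big|. \notag
\end{align}
Finally I would bound the first term on the right by $\int_{-\infty}^{\infty}|g(u+iV)|\,du$ (dropping the restriction of the $u$-integral to $(-\infty,x]$ and using $|\im g|\le|g|$), and take the supremum over $x\in\mathbb J'_\varepsilon$ in the second term; assembling these gives exactly the claimed inequality.

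The only genuinely delicate point is the vanishing of the contour integral along the far vertical segment: one must check that $\int_{v'}^{V}|g(-R+iu)|\,du\to 0$, which follows from the uniform estimate $|S_F(-R+iu)+\tfrac1{R-iu}|=O(R^{-2})$ valid uniformly for $u\in[v',V]$, and likewise for $S_{G_y}$ since $G_y$ is compactly supported. Everything else is a routine application of analyticity and Fubini; no new combinatorial or probabilistic input is needed here, since this corollary is purely a statement about Stieltjes transforms of arbitrary distribution functions.
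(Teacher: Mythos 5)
Your proof is essentially the same as the paper's: apply Cauchy's theorem to $g=S_F-S_{\widetilde G_y}$ on the rectangle with corners $-R+iv'$, $x+iv'$, $x+iV$, $-R+iV$, let $R\to\infty$, take imaginary parts, and bound the horizontal piece by the full-line integral. One small caveat: your justification that the far vertical side vanishes via the estimate $|S_F(-R+iu)+\tfrac1{R-iu}|=O(R^{-2})$ is stronger than what holds for an \emph{arbitrary} distribution $F$ (that $O(R^{-2})$ rate needs a first-moment condition); however, the vanishing still holds with no moment assumption, since $|S_F(-R+iu)|\le v'^{-1}\Pr\{|\xi|>R/2\}+2/R\to0$ uniformly on $u\in[v',V]$, which is precisely the bound the paper uses.
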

\begin{proof}Let $x:\4|x|\in\mathbb J'_{\varepsilon}$ be fixed. Let 
$\gamma=\gamma(x)=\min\{|x|-1+\sqrt y,\,1+\sqrt y-|x|\}$.
 Set $z=u+iv'$,
$v'\le V$. Since the functions of $S_F(z)$ and $S_{\widetilde G_y}(z)$ are analytic in 
the upper half-plane, it is enough to use Cauchy's theorem. We can write
\begin{equation}
\int_{-\infty}^{x}\im(S_F(z)-S_{G_y}(z))du=\lim_{L\to\infty}\int_{-L}^x
(S_F(u+iv')-S_{G_y}(u+iv'))du,
\end{equation}
for $x\in \mathcal J'_{\varepsilon}$.
Since $v'=\frac v{\sqrt{\gamma}}\le \frac{\varepsilon}{2H}$, without loss of generality 
we may assume that $v'\le 2$.
By Cauchy's integral formula, we have
\begin{align}
 \int_{-L}^x(S_F(z)-S_{\widetilde G_y}(z))du&=\int_{-L}^x(S_F(u+iV)
-S_{\widetilde G_y}(u+iV))du\notag\\&
+\int_{v'}^V(S_F(-L+iu)-S_{\widetilde G_y}(-L+iu))du\notag\\&-\int_{v'}^V(S_F(x+iu)
-S_{\widetilde G_y}(x+iu))du.
\end{align}
Denote by $\xi$ (resp. $\eta$) a random variable with distribution function $F(x)$ 
(resp. $\widetilde G_y(x)$). Then we have
\begin{equation}
 |S_F(-L+iv')|=\left|\E\frac1{\xi+L-iv'}\right|\le {v'}^{-1}\Pr\{|\xi|>L/2\}+\frac2L.
\end{equation}
Similarly,
\begin{equation}
 |S_{\widetilde G_y}(-L+iv')|\le {v'}^{-1}\Pr\{|\eta|>L/2\}+\frac2L.
\end{equation}
These inequalities imply that
\begin{equation}
\left|\int_{v'}^V(S_F(-L+iu)-S_{G_y}(-L+iu))du\right|\to 0\quad\text{as}\quad L\to\infty,
\end{equation}
which completes the proof.
\end{proof}
Combining the results of Proposition \ref{smoothing} and Corollary \ref{Cauchy}, we get
\begin{cor}\label{smoothing1}
 Under the conditions of Proposition \ref{smoothing} the following inequality holds
\begin{align}
 \Delta(F,\widetilde G_y)&\le 2\int_{-\infty}^{\infty}|S_F(u+iV)-S_{\widetilde G_y}(u+iV)|du
+C_1v+C_2\varepsilon^{\frac32}\notag\\&
  +2\sup_{x\in\mathbb J'_{\varepsilon}}\int_{v'}^V|S_F(x+iu)-S_{\widetilde G_y}(x+iu)|du,
\end{align}
where $v'=\frac v{\sqrt{\gamma}}$ with $\gamma=\min\{|x|-1+\sqrt y,\,1+\sqrt y-|x|\}$.

\end{cor}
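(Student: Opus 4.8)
The plan is to obtain this corollary as a straightforward concatenation of Proposition~\ref{smoothing} and Corollary~\ref{Cauchy}; no new analytic input is needed. First I would invoke Proposition~\ref{smoothing}, which already reduces the claim to bounding the quantity
$$
\sup_{x:|x|\in\mathbb J'_{\varepsilon}}\Big|\im\int_{-\infty}^x\big(S_F(u+iv')-S_{\widetilde G_y}(u+iv')\big)\,du\Big|,
\qquad v'=\frac v{\sqrt{\gamma(x)}},
$$
since the additive terms $C_1v+C_2\varepsilon^{\frac32}$ and the leading factor $2$ already appear on the right-hand side of the asserted inequality.

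Next, fix $x$ with $|x|\in\mathbb J'_{\varepsilon}$ and apply Corollary~\ref{Cauchy} with this $x$ and the given $V>v$. This replaces the horizontal segment at height $v'$ by the horizontal line at height $V$ together with the vertical segment from $v'$ to $V$ over the endpoint $x$, so that the displayed quantity for this $x$ is at most
$$
\int_{-\infty}^{\infty}\big|S_F(u+iV)-S_{\widetilde G_y}(u+iV)\big|\,du
+\Big|\int_{v'}^{V}\big(S_F(x+iu)-S_{\widetilde G_y}(x+iu)\big)\,du\Big|.
$$
The first term is free of $x$; in the second I would move the modulus inside the integral, bounding it by $\int_{v'}^{V}|S_F(x+iu)-S_{\widetilde G_y}(x+iu)|\,du$. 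Taking the supremum over $x\in\mathbb J'_{\varepsilon}$ — keeping in mind that $v'$ depends on $x$ and must therefore stay under that supremum, while the line-$V$ integral is over all of $\R$ and is $x$-free — and substituting back into the Proposition~\ref{smoothing} estimate yields exactly the claimed bound, the factor $2$ from Proposition~\ref{smoothing} multiplying both contour contributions.

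There is essentially no obstacle here: the real work, namely shifting the contour via Cauchy's theorem and verifying that the contributions of the far vertical segments $\re z=\pm L$ vanish as $L\to\infty$ via the tail bounds on $F$ and $\widetilde G_y$, has already been performed in Corollary~\ref{Cauchy}. The only points requiring attention are bookkeeping: that taking $\im$ and integrating over $u$ commute, so that the modulus of the imaginary part is dominated by the modulus of the full integral; that the constants $C_1,C_2$ and the hypotheses $\tau=\tfrac34$ and $2vH\le\varepsilon^{\frac32}$ are inherited unchanged from Proposition~\ref{smoothing}; and that the factor $2$ is carried through both resulting integrals.
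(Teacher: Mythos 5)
Your proposal is exactly the paper's argument: Corollary~\ref{smoothing1} is obtained by substituting the contour-shift bound of Corollary~\ref{Cauchy} into the estimate of Proposition~\ref{smoothing} and carrying the factor~$2$ through both resulting integrals. The bookkeeping points you flag (the $x$-dependence of $v'$ under the supremum, moving the modulus inside the vertical integral) are handled correctly and match the paper's one-line derivation.
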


 \section{Truncation}We consider the truncated random variables $\widehat X_{jl}$ defined by
\begin{equation}\label{trunc000}
 \widehat X_{jl}:=X_{jl}\mathbb I\{|X_{jl}|\le c\lna^{\frac1{\varkappa}} \}-\E X_{jl}\mathbb I\{|X_{jl}|\le c\lna^{\frac1{\varkappa}}\}.
\end{equation}
Let $\widehat{\mathbf  X}=(\widehat X_{jk})$ and 
$$
\widehat V=\frac1{\sqrt p}\begin{bmatrix}&\mathbf O&\widehat{\mathbf X}\\&{\widehat{\mathbf X}}^*&\mathbf O\end{bmatrix}.
$$
Let $\widehat {\mathcal F}_n(x)$ denote the empirical spectral distribution function of the matrix $\widehat{\mathbf V}$ and let $\widehat m_n(z)$
be the corresponding Stieltjes transform. Let $\sigma_{jk}^2=\E(\widehat X_{jk})^2$. Introduce the  r.v.'s $\widetilde X_{jk}=\sigma_{jk}^{-1}\widehat X_{jk}$.
Consider the matrix $\widetilde {\mathbf X}=(\widetilde X_{jk})$ and
$$
\widetilde{\mathbf V}=\frac1{\sqrt p}\begin{bmatrix}&\mathbf O&\widetilde{\mathbf X}\\&{\widetilde{\mathbf X}}^*&\mathbf O\end{bmatrix}.
$$
Let $\widetilde{\mathcal  F}_n(x)$ and $\widetilde m_n(z)$ denote the corresponding empirical spectral distribution function of the matrix $\widehat{\mathbf V}$ and its  Stieltjes  transform respectively.
 Let $\widetilde{\mathcal  F}_n(x)$ and $\widetilde m_n(z)$ denote the corresponding empirical spectral distribution function of the matrix $\widehat{\mathbf V}$ and its  Stieltjes  transform respectively.
\begin{lem}\label{truncnew}Under conditions of Theorem \ref{main} there exist  constants $C,c$ depending on $\varkappa$  only such that
\begin{align}
\Pr\{|\int_{-\infty}^{\infty}|m_n(u+iV)-\widehat m_n(u+iV)|du\ge \frac{C}{n}\}\le\exp\{-c\lna\}.
\end{align}
\end{lem}
\begin{proof}First we note that for all resolvent matrices $\mathbf R(u+iV)$ 
\begin{align}
\int_{-\infty}^{\infty}\|\mathbf R(u+iV)\|_2^2du=\sum_{j=1}^n\int_{-\infty}^{\infty}\frac1{(\lambda_j-u)^2+V^2}du=
\frac{n\pi}V.
\end{align}
By resolvent equality, we have
\begin{align}
m_n(z)-\widehat m_n(z)=\frac1n\Tr(\mathbf V-\widehat{\mathbf V})\mathbf R\widehat{\mathbf R}. 
\end{align}
Applying Cauchy - Schwartz inequality, we get
\begin{align}
|m_n(z)-\widehat m_n(z)|\le \frac1n\|\mathbf V-\widehat{\mathbf V}\|_2(\|\mathbf R\|_2^2+\|\widehat{\mathbf R}\|_2^2).
\end{align}
After integrating we arrive
\begin{align}
\int_{-\infty}^{\infty}|m_n(u+iV)-\widehat m_n(u+iV)|du\le C\|\mathbf V-\widehat{\mathbf V}\|_2,
\end{align}
for some numerical constant $C$.
Furthermore, by definition of matrices $\mathbf V$ and $\widehat{\mathbf V}$ we have
\begin{align}
\|\mathbf V-\widehat{\mathbf V}\|_2^2\le 4\|\mathbf X-\widehat{\mathbf X}\|_2^2\le\frac1n\sum_{j=1}^n\sum_{k=1}^p|X_{jk}-\widehat X_{jk}|^2+\frac1n\sum_{j=1}^n\sum_{k=1}^p|\E\widehat X_{jk}|^2
\end{align}
From condition \eqref{exptails} it follows now 
\begin{equation}
\Pr\{\|\mathbf V-\widehat{\mathbf V}\|_2\ge \frac C{n^2}\}\le \exp\{-c\lna\}.
\end{equation}
Thus Lemma \ref{truncnew} is proved.
\end{proof}
\begin{lem}\label{trunc}
 Assuming the conditions of Theorem \ref{main} there exist constants $C, c>0$ such that for all $z\in\mathbb G$
 \begin{equation}\notag
  \Pr\{|m_n(z)-\widetilde m_n(z)|\ge \frac{c}{n^2v^2}\}\le \exp\{-c\lna\}.
 \end{equation}

\end{lem}
\begin{proof}For a  proof of this Lemma see Subsection \ref{proofoftrunc} of the Appendix.
 
\end{proof}
\begin{rem}\label{trunc00}
 In what follows we shall assume that r.v.'s $X_{jl}$ satisfy the condition
 \begin{equation}\label{trunc2}
  |X_{jl}|\le C\lna^{\frac1{\varkappa}}, \quad\E X_{jl}=0 \quad\text{  and   }\quad \E X_{jk}^2=1.
 \end{equation}
We shall omit the symbol $\,\,\widehat{\cdot}\,\,$ in the notation of the truncated r.v.'s and corresponding characteristics of truncated matrices.
\end{rem}
\section{Diagonal entries of resolvent matrices}\label{diag} In this Section we investigate the diagonal entries of resolvent matrix $R_{jj}$.
Let
$v_k=v_0+\frac {k\sqrt y}{n^2}$, where $k=0,\ldots,N$ and $N=4n^2$. We introduce the following events
\begin{align}
\mathcal A_k&=\{|m_n(z)-s(_y(z)|\le\frac1{2\sqrt y}, \text{ for }z=u+iv, v\ge v_k\},\notag\\
\mathcal B_k&=\{|\varepsilon_j|\le {\gamma_0}{\sqrt y},\text{ for }j=1,\ldots,n;
\,z=u+iv, v\ge v_k\}, 
\end{align}
where $\gamma_0$ is some absolute constant and is to be chosen later.
\subsection{The Key Lemma}
\begin{lem}\label{key}Under the conditions of  Theorem \ref{main} there exist absolute constants $\gamma_0$ and $c_0$ such that, for $k=1,\ldots,N$, 
\begin{equation}
\mathcal A_{k}\cap\mathcal B_k\subset\mathcal A_{k-1}.
\end{equation}
\end{lem}
\begin{proof}The proof of this Lemma see in Appendix, Subsection \ref{profofkey}.
\end{proof}
\begin{cor}\label{cor001}
 We have
 \begin{equation}\notag
  \Pr\{\mathcal A_k^c\}\le \sum_{l=k+1}^N\Pr\{\mathcal B_l^c\cap \mathcal A_l\}.
 \end{equation}
\end{cor}
\begin{proof}Note that $v_N\ge 4\sqrt y$. We have, for $z=u+iv$ with $v\ge v_N$
$$
\max\{|m_n(z)|,|s(z)|\}\le \frac1{4\sqrt y} \text{ a. s.}
$$
That means that
\begin{equation}
 \Pr\{A_N\}=1.
\end{equation}
By Lemma \ref{key}, we have
\begin{equation}\label{cru6}
 \Pr\{\mathcal A_k^{(c)}\}\le \Pr\{(\mathcal B_{k+1}\cap\mathcal A_{k+1})^c\}\le \Pr\{\mathcal B_{k+1}^c\cap\mathcal A_{k+1}\}+\Pr\{\mathcal A_{k+1}^c\}.
\end{equation}
The claim of Corollary \ref{cor001} now follows from \eqref{cru6} by induction.
 \end{proof}
\begin{cor}
For $k=0,\ldots, N$ the following inequality holds
\begin{equation}
\Pr\{\mathcal A_k^c\}\le \sum_{l=k+1}^N\Pr\{\mathcal B_l^c\}.
\end{equation}
\end{cor}

\begin{lem}\label{cru2}
 Under the conditions of  Theorem \ref{main} there exist  constant $A_0$  such that, for $k=1,\ldots,N$,  
\begin{equation}
\Pr\{\mathcal A_{k}\cap\mathcal B_k^c\}\le \exp\{-c\lna^2\}.
\end{equation}
\end{lem}
\begin{proof}
 Note that for $\omega\in\mathcal A_k$
 \begin{equation}
  \im m_n^{j)}(z)\le \frac 3{2\sqrt y}+\frac1{nv}.
 \end{equation}
Furthermore, take the constant $A_0$ in the definition of $v_0$ such that
\begin{equation}
 A_0\ge \frac{3C^2}{2y\sqrt y\gamma_0^2}+\frac{2C^2}{y\gamma_0^2},
\end{equation}
where $C$ is the constant from Proposition \ref{epsilon}.
Then it is straightforward to check that, for all $v\ge v_0$,
\begin{equation}
 \gamma_0\sqrt y\ge \frac{C\sqrt{\im m_n^{(j)}(z)+\frac1{nv}}\lna^{2+\frac2{\varkappa}}}{\sqrt{nv}}.
\end{equation}
We may write now that, for any $k=1,\ldots,N$,
\begin{equation}
 \Pr\{\mathcal A_{k}\cap{\mathcal B_k}^c\}\le \Pr\{|\varepsilon_j|\ge \frac{C\sqrt{\im m_n^{(j)}(z)+\frac1{nv}}\lna^{2+\frac2{\varkappa}}}{\sqrt{nv}}\}.
\end{equation}
Applying now Proposition \ref{epsilon} we get the claim.
Thus lemma \ref{cru2} is proved.
\end{proof}


Introduce now for $z=u+iv\in\mathbb C_+$ and $v'\ge v_0$,
\begin{equation}\notag
\mathcal A_{v'}=\{|m_n(z)-s(z)|\le \frac1{2\sqrt y}, \text{ for all }
u\in\mathbb R, \text{ and }v\ge v'\}.
\end{equation}
 Applying Lemmas \ref{key} and \ref{cru2}, we get
\begin{cor}\label{cru3} The following inequality holds, 
\begin{equation}\label{gnu1}
 \Pr\{\mathcal A_{v_0}^c\}\le \exp\{-C\lna^2\},
\end{equation}
for some positive constant $C>0$.
There exist a constant $C>0$ such that
\begin{equation}\label{rjj}
\Pr\{\{|R_{jj}|\le \frac3{2\sqrt y},\text{ for all } j=1,\ldots,n\}\}\ge1-
\exp\{-C\lna^2\}.
\end{equation}
Moreover, for $z=u+iv$ with $a\le |u|\le b$ and $0<v\le 4\sqrt y$, there exist a constant $\delta>0$ depending on $y$ only such that
\begin{equation}\label{rjj1}
\Pr\{\{|R_{jj}|\ge \delta,\text{ for any } j=1,\ldots,n\}\cap \mathcal U\}\ge1-
\exp\{-C\lna^2\}.
\end{equation}
We may take $\delta=\frac1{28}$.
\end{cor}
\begin{proof}
By Lemmas \ref{key} and \ref{cru2}, we have
\begin{align}\notag
\Pr\{\mathcal A_{v_0}^c\}&\le \sum_{t=1}^N\Pr\{\mathcal B_t^c\cap A_t\}\le
n^2\exp\{-C\lna^2\}\le \exp\{-C'\lna^2\}
\end{align}
with some positive constant $C'$.
This inequality and Lemma \ref{cru2} yield  inequality \eqref{gnu1}.
Furthermore, we note that
the events $\mathcal A_{v_0}$ and $\mathcal B_k$ together imply by \eqref{repr01}  that
\begin{equation}
|R_{jj}(u+iv)|\le \frac3{2\sqrt y}.
\end{equation}
Note that the events $\mathcal A_{v_0}$ and $\mathcal B_k$ imply by \eqref{repr01}
\begin{equation}\notag
 |R_{jj}(u+iv)|\ge \frac1{2|z+\frac{y-1}z+ym_n(z)|}\ge \frac1{4(1+4\sqrt y)}\ge \frac1{20},
\end{equation}
for $1-\sqrt y<|u|\le 1+\sqrt y$. We use here that, for $1-\sqrt y<|u|\le 1+\sqrt y$,
$$
|z+\frac{y-1}z|\le 1+\sqrt y+|z|\le 2(1+\sqrt y)+v\le 2+6\sqrt y.
$$

This proves inequalities \eqref{rjj} and \eqref{rjj1}.
\end{proof}
 
\begin{prop}\label{rjj8}Under the conditions of  Theorem \ref{main} there exist  constants $C$  and $A$ depending on $\varkappa$  and $\alpha$ only such that, for $j=1,\ldots,n$ and $q\le A\log n$,  and for $v\ge v_0$
\begin{equation}
 \E|R_{jj}|^q\le C^q.
\end{equation}

\end{prop}
\begin{proof}
 Since $|R_{jj}|\le v^{-1}\le v_0^{-1}$ we have
\begin{equation}
   \E|R_{jj}|^q\le (\frac 3{2\sqrt y})^q+v_0^{-q}\Pr\{|R_{jj}|>\frac 3{2\sqrt y}\}.
 \end{equation}
Applying Corollary \ref{cru3}, inequality \eqref{rjj}, we get 
\begin{equation}
   \E|R_{jj}|^q\le (\frac 3{2\sqrt y})^q+\exp\{c\log n^2-\lna^2\}\le C^q,                                             
\end{equation}
for some constant $C> \frac 3{2\sqrt y}$.
Thus Proposition \ref{rjj8} is proved.
\end{proof}

\begin{prop}\label{an}Under the conditions of  Theorem \ref{main} there exist  constants $C$  and $A$ depending on  $\varkappa$, $y$ and $\alpha$ only such that, for $j=1,\ldots,n$ and $q\le A\log n$,  and for $v\ge v_0$
\begin{equation}
 \E\frac1{|a_n(z)|^q}\le C^q.
\end{equation}

\end{prop}
\begin{proof}First we note that the even $\mathcal A_{v_0}$ yields that
\begin{equation}
 |a_n(z)|\ge |a(z)|-y|m_n(z)-S_y(z)|\frac{\sqrt y}{2}
\end{equation}

 Since $|a_n(z)|\ge \im z\ge v$ we have
\begin{equation}
   \E\frac1{|a_n(z)|^q}\le (\frac 2{\sqrt y})^q+v_0^{-q}\Pr\{|a_n(z)|>\frac {\sqrt y}2\}.
 \end{equation}
Applying Corollary \ref{cru3},  we get 
\begin{equation}
   \E|a_n(z)|^{-q}\le (\frac 2{\sqrt y})^q+\exp\{c\log n^2-\lna^2\}\le C^q,                                             
\end{equation}
for some constant $C\ge \frac 2{\sqrt y}$.
Thus Proposition \ref{an} is proved.
\end{proof}


\section{Proof of Theorem \ref{main}}To conclude the proof of Theorem \ref{main}
 we shall now apply the result of Corollary \ref{smoothing1} with $ v_0=\frac{A_0\beta_n^4}{n}$ and $V=4\sqrt y$
 to the empirical spectral
 distribution function
$\mathcal F_n(x)$ of the random matrix $\mathbf W$.
 At first we estimate the integral over the line  $V=4\sqrt y$. Consider  the integral
\begin{equation}\notag
 Int(V)=\int_{-\infty}^{\infty}|m_n(u+iV)-S_y(u+iV)|du
\end{equation}
for $V=4\sqrt y$.
Using Theorem \ref{stlambda1}, we have
\begin{align}\notag
\Pr\{ |Int(V)|&\le \frac{ C\beta_n^3}n\int_{-\infty}^{\infty}|S_y(u+iV)|^{\frac32}du\}\ge1- \exp\{-c\lna\}.
\end{align}
Finally, we note that
\begin{equation}\label{finish6}
 \int_{-\infty}^{\infty}|S_y(z)|^{\frac32}dx\le \int_{-\infty}^{\infty}\int_{-\infty}^{\infty}\frac1{((x-u)^2+V^2)^{\frac32}}du\,dG(x)\le C.
\end{equation}
The last inequality implies
\begin{equation}\label{final!}
\Pr\{ \int_{-\infty}^{\infty}|\Lambda_n(u+iV)|du> \frac {C\beta_n^4}n\}\le\exp\{-c\lna\}.
\end{equation}
Consider now $u\in\mathbb J_{\varepsilon}$, where $\varepsilon=v_0^{\frac23}$. By Proposition \ref{stlambda},
we have
\begin{equation}\label{fin10}
 \Pr\{|\Lambda_n(z)|\le \frac{\lna^{2+\frac2{\varkappa}}}{nv}\}\ge
 1-\exp\{-c\lna\}.
\end{equation}
Integrating now in $v\in [v_0/\sqrt{\gamma}, V]$, we get
\begin{equation}
 \int_{v_0/\sqrt{\gamma}}^V|\Lambda_n(u+iv)|dv\le
 \frac{C\beta_n^2\log n}{n}.
\end{equation}
Thus, Theorem \ref{main} is proved.
 \section{The proof of Theorem \ref{stlambda1}}
 We shall use the ``symmetrization'' of the spectrum sample covariance matrix as in  
\cite{GT:09}.
Introduce the $(p+n)\times (p+n)$ matrix
\begin{equation}
 \mathbf V=\frac1{\sqrt p}\begin{bmatrix}& \mathbf O\quad&\mathbf X\\&\mathbf X^*
\quad&\mathbf O\end{bmatrix},
\end{equation}
where $\mathbf O$ denotes a matrix of corresponding dimension  with zero entries.
Note that the eigenvalues of the matrix $\mathbf V$ are $\pm s_1,\ldots,\pm s_n,$ and $0$ 
with multiplicity $p-n$.
Let
$\mathbf R=\mathbf R(z)$ denote the resolvent matrix of  $\mathbf V$ defined by the equality
$$
\mathbf R=(\mathbf V-z\mathbf I_{n+p})^{-1},
$$
for all $z=u+iv$ with $v\ne 0$. Here and in what follows $\mathbf I_k$ denotes the identity 
matrix of order $k$.
Sometimes we shall omit the  sub index in the notation of the identity  matrix.

We shall use in what follows the representation, for $j=1,\ldots, n$,
\be\label{diagres}
R_{jj}=\frac1{-z-\frac1p{{\sum_{k,l=1}^p}}X_{jk}X_{jl}R^{(j)}_{k+n,l+n}}
\en
(see,
for example, Section 3 in \cite{GT:09}). We may rewrite it as
follows
\be\label{repr01}
 R_{jj}=-\frac1{z+ym_n(z)+\frac{y-1}z}+
\frac1{z+ym_n(z)+\frac{y-1}z}\varepsilon_jR_{jj}, \en
where
$\varepsilon_j=\varepsilon_{j1}+\varepsilon_{j2}+\varepsilon_{j3}$ and
\begin{align}
\varepsilon_{j1}&:=\frac1p{\sum_{k=1}^p}(X_{jk}^2-1)R^{(j)}_{k+n,k+n},\quad
\varepsilon_{j2}:=\frac1p{\sum_{1\le k\ne
l\le p}}X_{jk}X_{jl}R^{(j)}_{k+n,l+n},\notag\\
\varepsilon_{j3}&:=\frac1p\Bigl(\sum_{l=1}^p R^{(j)}_{l+n,l+n}-\sum_{l=1}^p R_{l+n,l+n}\Bigr).
\notag
\end{align}
This relation immediately implies the following  equations
\begin{align}\label{repr03}
 m_n(z)&=-\frac1{z+ym_n(z)+\frac{y-1}z}+\frac1{(z+ym_n(z)+\frac{y-1}z)}
\frac1n\sum_{j=1}^n\varepsilon_jR_{jj}
\end{align}
Introduce  notations  $a_n(z)=z+\frac{y-1}z+ym_n(z)$, and 
 $b_n(z)=a_n(z)+yS_y(z)$. 
Equality \eqref{reprst}  implies that
\begin{equation}\label{semi20}
 1-\frac y{(z+\frac{y-1}z+yS_y(z))a_n(z)}=1+\frac{yS_y(z)}{a_n(z)}=\frac{b_n(z)}{a_n(z)}.
\end{equation}
The representation \eqref{repr03}  implies 
\begin{equation}\label{repr100*} 
 \Lambda_n(z)=\frac{y\Lambda_n(z)}{(z+\frac{y-1}z+yS_y(z))a_n(z)}+\frac{T_n(z)}{a_n(z)}, 
\end{equation} 
where
\begin{align}
T_n(z):=\frac1{n}\sum_{j=1}^n\varepsilon_jR_{jj}.
\end{align}
From here it follows by solving for $\Lambda_n(z)$ that
\begin{equation}\label{lambda'}
 \Lambda_n(z)=\frac{T_n(z)}{b_n(z)}.
\end{equation}

We start from following
\begin{prop}\label{lambdalarge}Under conditions of Theorem \ref{main} there exist constant $A$ and $C$ such that for any $q\le A\log n$
for $V=4\sqrt y$,
\begin{equation}
\E|\Lambda_n(z)|^q\le \frac{Cq^{2q}|S_y(z)|^{\frac{3q}2}}{n^q}
\end{equation}
\end{prop}

\begin{proof}We start from the simple relations for the functions $a_n(z)$ and $b_n(z)$. Let
$a(z)=z+\frac{y-1}z+yS_y(z)=-\frac1{S_y(z)}$ and $b(z)=a(z)+yS_y(z)$. For $V=4\sqrt y$ we have
\begin{align}\label{re1*}
\max\{|m_n(z)|,|S_y(z)|\}&\le \frac1{4\sqrt y}, \quad |m_n(z)-S_y(z)|\le \frac1{2\sqrt y},\notag\\
|a_n(z)-a(z)|&\le \frac{\sqrt y}2\le \frac12|a(z)|, |a_n(z)|\ge \frac12|a(z)|=\frac1{2|S_y(z)|},\notag\\
|b_n(z)|&\ge \frac12|a(z)|=\frac1{2|S_y(z)|}.
\end{align}
Let
\begin{equation}\notag
 \varphi(z)=\overline z|z|^{q-2}.
\end{equation}
Using  equality \eqref{lambda'}, we may write, for $q\ge2$,
\begin{align}\notag
 \E|\Lambda_n|^q=\sum_{\nu=1}^3\E\frac{T_{n\nu}}{b_n(z)}\varphi(\Lambda_n).
\end{align}
where
\begin{equation}\notag
 T_{n\nu}=\frac1n\sum_{j=1}^n\varepsilon_{j\nu}R_{jj}.
\end{equation}
We consider first the term with $\nu=3$.
equality \eqref{e3} in the Appendix yields
\begin{equation}\label{3.9}
 \frac1n\sum_{j=1}^n\varepsilon_{j3}R_{jj}=-\frac12\frac d{dz}m_n(z)+\frac1zm_n(z),
\end{equation}
which implies that
\begin{equation}\label{nu3}
|\frac1n\sum_{j=1}^n\varepsilon_{j3}R_{jj}|\le \frac1V|m_n(z)|.
\end{equation}
Inequalities
\eqref{nu3},   \eqref{re1*} and representation \eqref{repr100*} together imply
\begin{align}\label{finish1}
 |\frac{T_{n3}}{b_n(z)}|&\le \frac1n|\sum_{j=1}^n\frac{\varepsilon_{j3}R_{jj}}{b_n(z)}|\le \frac {C|S_y(z)|}n|m_n(z)|\notag\\&\le \frac Cn|S_y(z)|^2(1+|T_n|).
\end{align}
Therefore, by H\"older's inequality,
\begin{align}
 \Big|\E\frac{T_{n3}\varphi(\Lambda_n)}{b_n(z)}\Big|\le \frac{C|S_y(z)|^{2}}n(1+\E^{\frac1q}|T_n|^q)
 \E^{\frac{q-1}q}|\varphi(\Lambda_n)|^{\frac{q}{q-1}}.\notag
\end{align}
It is straightforward to check that, by the Cauchy -- Schwartz inequality and  $|R_{jj}|\le V^{-1}$,
\begin{equation}\notag
 \E|T_n|^q\le \E\Big(\frac1n\sum_{j=1}^n|\varepsilon_j|^2\Big)^{\frac q2}
 \Big(\frac1n\sum_{j=1}^n|R_{jj}|^2\Big)^{\frac q2}
 \le \frac1{(4\sqrt y)^q}\E^{\frac12}\Big(\frac1n\sum_{j=1}^n|\varepsilon_j|^2\Big)^{q}.
\end{equation}
Applying Corollary \ref{eps1} and Lemmas \ref{eps2} and \ref{eps3} in the Appendix, we get that there exists an absolute  constant $C''>0$ 
such that for $q\le C'\log n$,
\begin{align}\notag
 \E^{\frac1q}|T_n|^q\le Cqn^{-\frac 12}\le C''CC'\le C.
\end{align}
Therefore,
\begin{equation}\notag
 \Big|\E\frac{T_{n3}\varphi(\Lambda_n)}{b_n(z)}\Big|\le \frac{C|S_y(z)|^{2}}n
 \E^{\frac{q-1}q}|\varphi(\Lambda_n)|^{\frac{q}{q-1}}.
\end{equation}
Furthermore, we represent, for $\nu=1,2$
\begin{align}\label{nu12}
 \frac1n\E\frac{\sum_{j=1}^n\varepsilon_{j\nu}
 R_{jj}\varphi(\Lambda_n)}{b_n(z)}=H_1+H_2,
\end{align}
where
\begin{align}\label{finish01}
 H_1&:=\frac1n\E\frac{\sum_{j=1}^n\varepsilon_{j\nu}
S_y(z)\varphi(\Lambda_n)}{b_n(z)},\notag\\
H_2&:=\frac1n\E\frac{\sum_{j=1}^n\varepsilon_{j\nu}(R_{jj}-
S_y(z))\varphi(\Lambda_n)}{b_n(z)}.
\end{align}
First we bound $H_2$. Applying   Cauchy -- Schwartz inequality followed by H\"older's inequality, we get
\begin{align}\label{eps0}
 |H_2|\le {C|S_y(z)|}\E^{\frac1{2q}}\Big(\frac1n\sum_{j=1}^n|\varepsilon_{j\nu}|^{2}\Big)^q\E^{\frac1{2q}}\Big(\frac1n\sum_{j=1}^n|R_{jj}-S_y(z)|^2\Big)^{q}
\E^{\frac{q-1}q}|\varphi(\Lambda_n)|^{\frac q{q-1}}.
\end{align}
Using the representation \eqref{repr01},  we may write
\begin{align}\label{lambda''}
 R_{jj}=S_y(z)-S_y(z)\varepsilon_jR_{jj}-S_y(z)\Lambda_nR_{jj}.
\end{align}
Applying the representations \eqref{lambda''} and \eqref{lambda'}, we obtain
\begin{align}\label{eps**}
 \frac1n\sum_{j=1}^n|R_{jj}(z)-S_y(z)|^{2}\le C|S_y(z)|^{2}\Big(\frac1n\sum_{l=1}^n|\varepsilon_l|^{2}\Big).
\end{align}
Combining inequalities \eqref{eps0} and \eqref{eps**}, we get
\begin{align}\notag
 |H_2|\le C|S_y(z)|^2\E^{\frac1q}\Big(\frac1n\sum_{j=1}^n|\varepsilon_{j}|^2\Big)^{q}\E^{\frac{q-1}q}|\varphi(\Lambda_n)|^{\frac q{q-1}}.
\end{align}
Applying now Corollary \ref{eps1} and Lemmas \ref{eps2}, \ref{eps3} in the Appendix, we get
\begin{align}\label{finish2}
 |H_2|\le \frac{C|S_y(z)|^{2}}{n}\E^{\frac{q-1}q}|\varphi(\Lambda_n)|^{\frac q{q-1}}.
\end{align}
Let 
\begin{align}\label{jate}
\eta_{j0}&:=\frac1p\sum_{l=1}^p[(\mathbf R^{(j)})^2]_{l+n,l+n},\notag\\
\eta_{j1}&:=\frac1p\sum_{l=1}^p(X_{jl}^2-1)[(\mathbf R^{(j)})^2]_{l+n,l+n},\notag\\
\eta_{j2}&:=\frac1p\sum_{1\le l\ne k\le p}X_{jl}X_{jk}[(\mathbf R^{(j)})^2]_{k+n,l+n},\notag\\
\eta_j&:=\eta_{j1}+\eta_{j2}.
\end{align}
Furthermore, introduce the notations
\begin{align}\label{amba}
 \Lambda_n^{(j)}&:=m_n^{(j)}(z)-s_y(z),\quad {\widetilde {\Lambda}}^{(j)}_n=\Lambda_n^{(j)}+\frac{s_y(z)}{2n}(1+\eta_{j0})+\frac1{2nz},\notag\\
a_n^{(j)}(z)&:=a(z)+y\Lambda_n^{(j)}=z+\frac{y-1}z+ym_n^{(j)}(z), \notag\\b_n^{(j)}(z)&:=b(z)+y\Lambda_n^{(j)}=z+\frac{y-1}z+y(m_n^{(j)}(z)+s_y(z)).
\end{align}
Note that
\begin{equation}
 \Lambda_n-\widetilde\Lambda_n^{(j)}=\frac1{2n}\eta_jR_{jj}+\frac1{2n}(R_{jj}-s_y(z))(1+\eta_{j0}).
\end{equation}

Note as well that random variables $X_{jl}$, for $l=1,\ldots,p$ and $\widetilde \Lambda_n^{(j)}$  are independent for all $j=1,\ldots,n$.
We continue with $H_1$ 
and  represent it in the form
\begin{align}\label{finish001}
 H_1:=H_{11}+H_{12}+H_{13},
\end{align}
where
\begin{align}
H_{11}&:=\frac1n\E\frac{\sum_{j=1}^n\varepsilon_{j\nu}
S_y(z)\varphi({\widetilde{\Lambda}_n^{(j)}})}{b_n^{(j)}(z)},\notag\\ 
H_{12}&:=\frac1n\E\frac{\sum_{j=1}^n\varepsilon_{j\nu}
S_y(z)(\varphi(\Lambda_n)-\varphi(\widetilde{\Lambda}_n^{(j)}))}{b_n^{(j)}(z)},\notag\\
H_{13}&:=-\frac1n\E\frac{\sum_{j=1}^n\varepsilon_{j\nu}
S_y(z)\widetilde\varepsilon_{j3}\varphi(\Lambda_n)}{b_n(z)b_n^{(j)}(z)},\notag
\end{align}
where

\begin{equation}
\widetilde\varepsilon_{j3}=m_n(z)-m_n^{(j)}(z)=\varepsilon_{j3}-\frac1{nz}.
\end{equation}
It is straightforward to check that, by conditional independence of $\varepsilon_{j\nu}$ and $\widetilde{\Lambda}_n^{(j)}$,
\begin{equation}\label{finish0001}
 H_{11}=0.
\end{equation}
Applying the Cauchy -- Schwartz inequality and  H\"older's inequality, and that $|\widetilde\varepsilon_{j3}|\le \frac C{n}$ for $V=4\sqrt y$, we get
\begin{align}\label{finish00001}
 |H_{13}|&\le \frac{C|S_y(z)|^{2}}{n}\E^{\frac1q}\Big(\frac1n\sum_{j=1}^n|\varepsilon_{j\nu}|^2\Big)^{\frac q2}\E^{\frac{q-1}q}|\varphi(\Lambda_n)|^{\frac q{q-1}}.
\end{align}
This inequality and Lemmas \ref{eps2} and \ref{eps3} in the Appendix, together imply
\begin{equation}
|H_{13}|\le \frac{Cq|S_y(z)|^{2}}{n}\E^{\frac{q-1}q}|\varphi(\Lambda_n)|^{\frac q{q-1}}
\end{equation}

We use  that 
\begin{equation}\notag
 \varepsilon_{j3}=\frac1{2p}(\Tr\mathbf R-\Tr\mathbf R^{(j)}+\frac1z)=\frac y{2n}(1+\eta_{j0}+\eta_j)R_{jj}+\frac y{2nz}.
\end{equation}
Note that 
\begin{equation}\label{lambda9}
 \delta_{nj}:=\Lambda_n-\widetilde{\Lambda}_n^{(j)}=\widetilde\varepsilon_{j3}-\frac {yS_y(z)(1+\eta_{j0})}{2n}-\frac{y}{2nz}=\frac y{2n}(R_{jj}-S_y(z))(1+\eta_{j0})+\frac1n\eta_jR_{jj}.
\end{equation}
This and that $|\eta_{j0}|\le V^{-2}$ yield
\begin{align}\label{gg1}
 |\delta_{nj}|\le \frac Cn|R_{jj}-S_y(z)|+\frac Cn|\eta_j|(|S_y(z)|+|R_{jj}-S_y(z)|).
\end{align}
By Taylor's formula $\varphi(x)-\varphi(y)=(x-y)\E_{\tau}\varphi'(x-\tau(x-y))$, we may write
\begin{align}\notag
 H_{12}=\frac{S_y(z)}n\E\frac{\sum_{j=1}^n\varepsilon_{j\nu}\delta_{nj}\varphi'(\Lambda_n-\tau\delta_{nj})
}{b_n^{(j)}(z)},
\end{align}
where $\tau$ denotes a uniformly distributed random variable on the unit interval which is independent of all other random variables.
It is straightforward to check that
\begin{equation}\label{lala}
 |\varphi'(\Lambda_n-\tau\delta_{nj})|\le q|\Lambda_n-\tau\delta_{nj}|^{q-2}\le Cq|\Lambda_n|^{q-2}+q^{q-1}|\tau\delta_{nj}|^{q-2}.
\end{equation}
Therefore, applying  the Cauchy -- Schwartz inequality, H\"older's inequality and finally inequality \eqref{lala}, we get
\begin{align}\notag
 |H_{12}|&\le C{q|S_y(z)|^2}\E^{\frac1{q}}(\frac1n\sum_{j=1}^n|\varepsilon_{j\nu}|^2)^{\frac q2}\E^{\frac1{q}}\Big(\frac1n\sum_{j=1}^n|\delta_{nj}|^2\Big)^{\frac q2}
 \E^{\frac{q-2}q}|\Lambda_n|^q\notag\\&\qquad\qquad\qquad\qquad+q^{q-1}|S_y(z)|^2\frac1n\sum_{j=1}^n\E|\varepsilon_{j\nu}||\delta_{nj}|^{q-1}.
\end{align}
Applying inequality \eqref{gg1} , we get
\begin{align}\notag
 |H_{12}|&\le K_1+K_2+K_3+K_4,
 \end{align}
 where
 \begin{align}
 K_1&:=\frac{Cq|S_y(z)|^2}n\E^{\frac1{q}}(\frac1n\sum_{j=1}^n|\varepsilon_{j\nu}|^2)^{\frac q2}\E^{\frac1{q}}(\frac1n\sum_{j=1}^n|R_{jj}-s(z)|^2)^{\frac q2}
 \E^{\frac{q-2}q}|\Lambda_n|^q,\notag\\
 K_2&:=\frac{Cq|S_y(z)|^3}n\E^{\frac1{q}}(\frac1n\sum_{j=1}^n|\varepsilon_{j\nu}|^2)^{\frac q2}\E^{\frac1{q}}
 (\frac1n\sum_{j=1}^n|\eta_{j}|^2)^{\frac q2}\E^{\frac{q-2}q}|\Lambda_n|^q,\notag\\
 K_3&:=\frac{Cq|S_y(z)|^2}n\E^{\frac1{q}}(\frac1n\sum_{j=1}^n|\varepsilon_{j\nu}|^2)^{\frac q2}\E^{\frac1{2q}}
 (\frac1n\sum_{j=1}^n|\eta_{j}|^4)^{\frac q2}\notag\\&\qquad\qquad\qquad\qquad\times
 \E^{\frac1{2q}}(\frac1n\sum_{j=1}^n|R_{jj}-s(z)|^4)^{\frac q2}\E^{\frac{q-2}q}|\Lambda_n|^q,\notag\\
K_4&:=\frac{q^{q-1}|S_y(z)|^2}n\sum_{j=1}^n\E|\varepsilon_{j\nu}||\delta_{nj}|^{q-1}.\notag
\end{align}
Using equality \eqref{lambda''} and $|\Lambda_n|\le \frac1{2\sqrt y}$ a. s., we get, for $z=u+iV$,
\begin{align}\label{lambdanew}
 |R_{jj}(z)-s(z)|&\le C|S_y(z)|^2(|\varepsilon_j|+|\varepsilon_j|^2+|\Lambda_n|+|\Lambda_n|^2)\notag\\&\le C|S_y(z)|^2(|\varepsilon_j|+|\varepsilon_j|^2+|\Lambda_n|).
\end{align}
Therefore,
\begin{align}
 \E^{\frac1{q}}\Big(\frac1n\sum_{j=1}^n|R_{jj}-S_y(z)|^2\Big)^{\frac q2}&\le C|S_y(z)|^4\Big(\E^{\frac1q}\Big(\frac1n\sum_{j=1}^n|\varepsilon_{j}|^2\Big)^{\frac q2}\notag\\&+
 \E^{\frac1q}\Big(\frac1n\sum_{j=1}^n|\varepsilon_{j}|^4\Big)^{\frac q2}+\E^{\frac1q}|\Lambda_n|^q\Big).
\end{align}
Applying the last inequality, we get
\begin{align}
 |K_1|&\le \frac{Cq|S_y(z)|^4}n\E^{\frac1{q}}(\frac1n\sum_{j=1}^n|\varepsilon_{j\nu}|^2)^{\frac q2}\E^{\frac1{q}}(\frac1n\sum_{j=1}^n|\varepsilon_{j}|^2)^{\frac q2}\E^{\frac{q-2}p}|\Lambda_n|^p\notag\\&+
 \frac{Cq|S_y(z)|^4}n\E^{\frac1{q}}(\frac1n\sum_{j=1}^n|\varepsilon_{j\nu}|^2)^{\frac q2}
 \E^{\frac1{q}}(\frac1n\sum_{j=1}^n|\varepsilon_{j}|^4)^{\frac q2}\E^{\frac{q-2}q}|\Lambda_n|^q\notag\\&+
 \frac{Cq|s(z)|^4}n\E^{\frac1{q}}(\frac1n\sum_{j=1}^n|\varepsilon_{j\nu}|^2)^{\frac q2}\E^{\frac{q-1}q}|\Lambda_n|^q.\notag
\end{align}
Using Corollary \ref{eps1} and Lemmas \ref{eps2}, \ref{eps3}, in the Appendix, we get
\begin{align}
 |K_1|&\le\frac{Cq^2|S_y(z)|^4}{n^2}
 \E^{\frac{p-2}p}|\Lambda_n|^p+\frac{Cp^2|S_y(z)|^2}{n}\E^{\frac{p-1}p}|\Lambda_n|^p.\notag
\end{align}
According to Corollary \ref{eps1} and Lemmas \ref{eps2}, \ref{eps3}, inequality \eqref{etaj}, in the Appendix we have
\begin{equation}\notag
 K_2\le \frac{Cq|S_y(z)|^4}{n^2}\E^{\frac{q-2}q}|\Lambda_n|^q,
\end{equation}
and
\begin{align}\notag
  K_3&\le\frac{Cq|S_y(z)|^4}{n^2}\E^{\frac{q-2}q}|\Lambda_n|^q.
\end{align}
To bound $K_4$ we use inequalities \eqref{gg1} and \eqref{lambdanew} and obtain
\begin{align}\label{k4}
 K_4&\le \frac{q^{q-1}|S_y(z)|^2}n\sum_{j=1}^n\E|\varepsilon_{j\nu}|
 \Big(\frac1n|R_{jj}-s(z)|(1+|\eta_j|)+\frac1n|\eta_j||S_y(z)|\Big)^{q-1}.
\end{align}

We rewrite now inequality \eqref{k4} in the form
\begin{align}
  K_4&\le \frac{q^{q-1}|S_y(z)|^2 }{n^{q-1}}\Big(\frac1n\sum_{j=1}^n\E|\varepsilon_{j\nu}|^q\Big)^{\frac1q}\Bigg(\frac1n\sum_{j=1}^n
 \E\Big(|R_{jj}-S_y(z)|(1+|\eta_j|)\notag\\&\qquad\qquad\qquad\qquad\qquad\qquad\qquad\qquad\qquad+|\eta_j||S_y(z)|\Big)^{q}\Bigg)^{\frac{q-1}q}.
\end{align}
Using again inequality \eqref{lambdanew},
we get
\begin{align}\label{k42}
K_4&\le \frac{C^qq^{q-1}|S_y(z)|^{2q} }{n^{q-1}}\Big(\frac1n\sum_{j=1}^n\E|\varepsilon_{j\nu}|^q\Big)^{\frac1q}
\Bigg(\frac1n\sum_{j=1}^n
 \E\Big((|\varepsilon_j|+|\Lambda_n|+|\varepsilon_j|^2)(1+|\eta_j|)^q\Bigg)^{\frac{q-1}q}\notag\\&+
\frac{C^qq^{q-1}|S_y(z)|^{q+1} }{n^{q-1}}\Big(\frac1n\sum_{j=1}^n\E|\varepsilon_{j\nu}|^q\Big)^{\frac1q} 
\Bigg(\frac1n\sum_{j=1}^n\E|\eta_j|^q\Bigg)^{\frac{q-1}q}.
\end{align}
Inequality \eqref{k42} and Lemmas \ref{eps1}, \ref{eps2} and \ref{eta} together  imply
\begin{equation}\label{k44}
 K_4\le \frac{C|S_y(z)|^{\frac{3q}2}q^{2q}}{n^{\frac{3q}2-1}}.
\end{equation}
Collecting the relations \eqref{finish1}, \eqref{finish01}, \eqref{finish001}, \eqref{finish0001}, \eqref{finish00001},  and \eqref{k44}
we get
\begin{align}
 \E|\Lambda_n|^q\le \frac{Cq|S_y(z)|^4}{n^2}\E^{\frac{q-2}q}|\Lambda_n|^q+\frac{C|S_y(z)|^{2}}{n}\E^{\frac{q-1}q}|\varphi(\Lambda_n)|^{\frac q{q-1}}
 +\frac{C|S_y(z)|^{\frac{3q}2}q^q}{n^q}.
\end{align}
Solving this inequality with respect to $\E|\Lambda_n|^q$, we get,
\begin{align}\label{disp}
 \E^{\frac1q}|\Lambda_n|^q\le\frac{Cq|S_y(z)|^{\frac32}}{n}.
\end{align}
\end{proof}
We return now to the proof of Theorem \ref{stlambda1}.  Applying Chebyshev's inequality and Proposition \ref{lambdalarge}, we get
\begin{equation}
\Pr\{|\Lambda_n|\ge \frac{Cq^{2}|S_y(z)|^{\frac{3}2}}{n}\}\le \frac{n^q\E|\Lambda_n|^q}{q^{2q}|S_y)z)|^{\frac{3q}2}}\le \frac 1{q^q}=\exp\{-c\log n\log\log n\}
\end{equation}
Thus Theorem \ref{stlambda1} is proved.
\section{The proof of Theorem \ref{stlambda}}Analogously to the proof of Theorem \ref{stlambda} we prove the following result.
\begin{prop}\label{lambdasmal}Under conditions of Theorem \ref{main} there exist constant $A$ and $C$ such that for any $q\le A\log n$ and for all $z\in\mathbb G$,
\begin{equation}
\E|\Lambda_n|^q\le \frac{C^qq^q\lna^q}{(nv)^q}.
\end{equation}
\end{prop}
\begin{proof}The proof of Proposition \ref{lambdasmal} is similar to the proof of Proposition \ref{lambdalarge}. We consider the equality
\begin{equation}
\E|\Lambda_n|^q=\E\Lambda_n\varphi(\Lambda_n)=\E\frac{T_n}{b_n(z)}\varphi(\Lambda_n),
\end{equation}
where
\begin{equation}
T_n=T_{n1}+T_{n2}+T_{n3},
\end{equation}
and 
\begin{equation}
T_{n\nu}=\frac1n\sum_{j=1}^n\varepsilon_{j\nu}R_{jj}, \quad\nu=1,2,3.
\end{equation}
We start from $T_{n3}$.  Using representation \eqref{e3} in the Appendix, it is straightforward to check that
\begin{equation}
T_{n3}=-\frac y{2n}m_n'(z)+\frac y{2nz}.
\end{equation}
This equality implies that
\begin{equation}
|T_{n3}|\le \frac 1{nv}\im m_n(z)+\frac1{n|z|}.
\end{equation}
From the other hand side
\begin{equation}
|b_n(z)|\ge y\im m_n(z)+(1-y)\im \frac1z=y\im m_n(z)+(1-y) \frac v{|z|^2}.
\end{equation}
This implies that, for $z\in\mathbb G$,
\begin{equation}
|T_{n3}||b_n^{-1}(z)|\le \frac C{nv}.
\end{equation}
Applying this ineqaulity and Jensen inequality we get
\begin{equation}\label{546}
\E|\frac{T_{n3}}{b_n(z)}\varphi(\Lambda_n)|\le \frac C{nv}\E^{\frac{q-1}q}|\Lambda_n(z)|^q.
\end{equation}
Now we consider $\nu=1,2$. We write the representation for $\Gamma_{\nu}:=\frac1n\E\frac{\sum_{j=1}^n\varepsilon_{j\nu}
 R_{jj}\varphi(\Lambda_n)}{b_n(z)}$ similar to representation \ref{nu12}
 \begin{align}
\Gamma_{\nu}=H_1+H_2,\notag
\end{align}
where
\begin{align}
 H_1&:=-\frac1n\E\frac{\sum_{j=1}^n\varepsilon_{j\nu}
\frac1{a_n^{(j)}(z)}\varphi(\Lambda_n)}{b_n(z)},\notag\\
H_2&:=\frac1n\E\frac{\sum_{j=1}^n\varepsilon_{j\nu}(R_{jj}+\frac1{a_n^{(j)}(z)}
)\varphi(\Lambda_n)}{b_n(z)}.
\end{align}
Using equality \eqref{repr01}, we may write
\begin{equation}
H_2:=\frac1n\sum_{j=1}^n\E\frac{\varepsilon_{j\nu}(\varepsilon_{j1}+\varepsilon_{j2})R_{jj}\varphi(\Lambda_n)}{a_n(z)b_n(z)}
\end{equation}
By inequality $ab\le \frac12(a^2+b^2)$, we have, for some numerical constant $C$,
\begin{equation}
|H_2|\le \frac Cn\sum_{\nu=1}^2\sum_{j=1}^n\E\frac{|\varepsilon_{j\nu}|^2|R_{jj}||\varphi(\Lambda_n)|}{|a_n^{(j)}(z)b_n(z)|}
\end{equation}
Applying H\"older's inequality, we get
\begin{equation}
|H_2|\le \frac Cn\sum_{\nu=1}^2\sum_{j=1}^n\E^{\frac1q}\frac{|\varepsilon_{j\nu}|^{2q}|R_{jj}|^{q}}{|a_n^{(j)}(z)b_n(z)|^q}
\E^{\frac{q-1}q}|\varphi(\Lambda_n)|^{\frac q{q-1}}.
\end{equation}
We estimate now the factor $D_{n\nu}^{q}:=\E\frac{|\varepsilon_{j\nu}|^{2q}|R_{jj}|^{q}}{|a_n^{(j)}(z)b_n(z)|^q}$.
Applying H\"older's inequality, we get
\begin{align}
D_{n\nu}^{q}\le \E^{\frac12}|R_{jj}|^{2q}
\E^{\frac12}\frac{|\varepsilon_{j\nu}|^{4q}}{|b_n(z)|^{2q}|a_n^{(j)}(z)|^{2q}}.
\end{align}
Using Lemma \ref{nana}, we get, for $z\in\mathbb G$,
\begin{equation}
D_{n\nu}^{q}\le C\E^{\frac12}|R_{jj}|^{2q}\E^{\frac12}\frac{|\varepsilon_{j\nu}|^{4q}}{|b_n^{(j)}(z)|^{2q}|a_n^{(j)}(z)|^{2q}}
\end{equation}
Conditioning on $\mathfrak M^{(j)}$ and applying Lemmas \ref{eps1} and \ref{eps2} in the Appendix, we obtain
\begin{align}
D_{n\nu}^{q}\le\frac{ C^qq^{2q}\lna^q}{(nv)^q}\E^{\frac12}|R_{jj}|^{2q}\E^{\frac12}\frac{(\im m_n^{(j)}(z))^{2q}}
{|b_n^{(j)}(z)|^{2q}|a_n^{(j)}(z)|^{2q}}\le \frac{ C^qq^{2q}\lna^q}{(nv)^q}\E^{\frac12}|R_{jj}|^{2q}\E^{\frac12}\frac{1}
{|a_n^{(j)}(z)|^{2q}}
\end{align}
The last inequality and Propositions \ref{rjj8} and \ref{an} in Section \ref{diag} together imply
\begin{align}
D_{n\nu}^{q}\le \frac{ C^qq^{2q}\lna^q}{(nv)^q},
\end{align}
and
\begin{equation}\label{555}
H_2\le \frac{ Cq^{2}\lna}{nv}\E^{\frac{q-1}q}|\varphi(\Lambda_n)|^{\frac q{q-1}}.
\end{equation}
We shall estimate now the quantity $H_1$. We represent it in the form
\begin{equation}
H_1=H_{11}+H_{12}+H_{13},
\end{equation}
where
\begin{align}
H_{11}&=-\frac1n\sum_{j=1}^n\E\frac{\varepsilon_{j\nu}\varphi({\widetilde\Lambda}_n^{(j)}(z))}{a_n^{(j)}b_n^{(j)}},\notag\\
H_{12}&=-\frac1n\sum_{j=1}^n\E\frac{\varepsilon_{j\nu}(b_n(z)-b_n^{(j)}(z))\varphi({\Lambda}_n(z))}{a_n^{(j)}b_n^{(j)}b_n(z)},\notag\\
H_{13}&=-\frac1n\sum_{j=1}^n\E\frac{\varepsilon_{j\nu}(\varphi({\Lambda}_n(z)-{\widetilde\Lambda}_n^{(j)}(z))}{a_n^{(j)}b_n^{(j)}}.
\end{align}
Since $\varepsilon_{j\nu}$ and ${\widetilde\Lambda}_n^{(j)}(z))$, $ a_n^{(j)}$, and $b_n^{(j)}(z)$ are independent, we have
\begin{equation}\label{558}
H_{11}=0.
\end{equation}
To estimate $H_{12}$ we apply H\"older's inequality.
We get
\begin{equation}
|H_{12}|\le \frac1n\sum_{j=1}^n\E^{\frac1{2q}}\frac{|\varepsilon_{j\nu}|^{2q}}{|b_n^{(j)}|^{2q}|a_n^{(j)}|^{2q}}
\E^{\frac1{2q}}\frac{|b_n(z)-b_n^{(j)}(z)|^{2q}}{|b_n(z)|^{2q}}
\E^{\frac{q-1}q}|\varphi(\Lambda_n)|^{\frac q{q-1}}.
\end{equation}
Using Lemmas \ref{bn}, \ref{eps1} and \ref{eps2}, we get
\begin{equation}\label{560}
|H_{12}|\le \frac{C^q}{(nv)^q}\E^{\frac{q-1}q}|\varphi(\Lambda_n)|^{\frac q{q-1}}\le \frac{C^q}{(nv)^q}\E^{\frac{q-1}q}|\Lambda_n|^q.
\end{equation}
We estimate now $H_{13}$. 
Note that, by Taylor's formula
\begin{equation}
\varphi({\Lambda}_n(z))-\varphi({\widetilde\Lambda}_n^{(j)}(z))=({\Lambda}_n(z)-{\widetilde\Lambda}_n^{(j)}(z))\E_{\tau}
\varphi'({\Lambda}_n(z)+\tau({\widetilde\Lambda}_n^{(j)}(z)-{\Lambda}_n(z)))
\end{equation}
where $\tau$ is uniform distributed on the unit interval random variable independent of all other r.v.'s.
Note that
\begin{equation}
|\varphi'({\Lambda}_n(z)+\tau({\widetilde\Lambda}_n^{(j)}(z)-{\Lambda}_n(z)))|\le Cq|\Lambda_n|^{q-2}
+C^qq^q|{\widetilde\Lambda}_n^{(j)}(z)-{\Lambda}_n(z))|^{q-2}.
\end{equation}
We recall equality \eqref{lambda9},
\begin{equation}\label{lambda9*}
 \Lambda_n(z)-{\widetilde\Lambda}_n^{(j)}(z))=\delta_{nj}=\frac y{2n}(R_{jj}-S_y(z))(1+\eta_{j0})+\frac yn\eta_jR_{jj}.
\end{equation}
By equality \eqref{lambda''}, we have
\begin{equation}\label{lamb}
 R_{jj}-S_y(z)=-S_y(z)\varepsilon_jR_{jj}-S_y(z)R_{jj}\Lambda_n(z).
\end{equation}
Combining these relations, we get
\begin{equation}
   \Lambda_n(z)-{\widetilde\Lambda}_n^{(j)}(z)) =   -\frac{yS_y(z)}{2n} \varepsilon_jR_{jj}(1+\eta_{j0})-\frac{yS_y(z)}{2n}  \Lambda_n(z)R_{jj} (1+\eta_{j0}) +\frac yn\eta_jR_{jj}.   
\end{equation}
Using this representation, we may write
\begin{align}
 |H_{13}|\le \widetilde H_1+\cdots+\widetilde H_3,
\end{align}
where
\begin{align}
 \widetilde H_1&:=\frac{Cq}{n^2}\sum_{j=1}^n\E\frac{|\varepsilon_{j\nu}|^2(1+|\eta_{j0}|)}{|b_n^{(j)}(z)||a_n^{(j)}(z)|}|\Lambda_n|^{q-2},\notag\\
 \widetilde H_2&:=\frac{Cq}{n^2}\sum_{j=1}^n\E\frac{|\varepsilon_{j\nu}|(1+|\eta_{j0}|)}{|b_n^{(j)}(z)||a_n^{(j)}(z)|}|\Lambda_n|^{q-1},\notag\\
 \widetilde H_3&:=\frac{Cq}{n^2}\sum_{j=1}^n\E\frac{|\varepsilon_{j\nu}||\eta_j|}{|b_n^{(j)}(z)||a_n^{(j)}(z)|}|\Lambda_n|^{q-2},\notag\\
 \widetilde H_4&=\frac{C^qq^q}{n^q}\sum_{j=1}^n\E\frac{|\varepsilon_{j\nu}||\varepsilon_j|^{q-1}(1+|\eta_{j0}|)^{q-1}|}{|b_n^{(j)}(z)||a_n^{(j)}(z)|}|R_{jj}|^{q-1},\notag\\
 \widetilde H_5&=\frac{C^qq^q}{n^q}\sum_{j=1}^n\E\frac{|\varepsilon_{j\nu}|(1+|\eta_{j0}|)^{q-1}|}{|b_n^{(j)}(z)||a_n^{(j)}(z)|}|\Lambda_n|^{q-1}|R_{jj}|^{q-1},\notag\\
 \widetilde H_6&=\frac{C^qq^q}{n^q}\sum_{j=1}^n\E\frac{|\varepsilon_{j\nu}|\eta_j|^{q-1}}{|b_n^{(j)}(z)||a_n^{(j)}(z)|}|R_{jj}|^{q-1}.
\end{align}

Note that
\begin{equation}
 1+|\eta_{j0}|\le v^{-1}\im(z+\im m_n^{(j)}(z))\le v^{-1}|b_n^{(j)}(z)|.
\end{equation}
Using this inequality and applying H\"older's inequality it is straightforward to check that
\begin{equation}
 \max\{\widetilde H_1, \widetilde H_3\}\le \frac{Cq^4\lna^{2q}}{(nv)^2}\E^{\frac{q-2}q},|\Lambda_n|^q
\end{equation}
and
\begin{equation}
 \widetilde H_2\le \frac {Cq\lna^q}{nv}\E^{\frac{q-1}q}|\Lambda_n|^q.
\end{equation}
Applying H\"older's inequality and lemmas \ref{eps1},\ref{eps2}, \ref{eta_j},  we prove that
\begin{equation}
 \max\{\widetilde H_4, \widetilde H_6\}\le \frac{C^qq^q\lna^q}{(nv)^q}.
\end{equation}
Finally, using that, for $z\in\mathbb G$,
\begin{equation}
 |\Lambda_n|\le c|T_n|^{\frac12},
\end{equation}
we get
\begin{equation}
 \widetilde H_5\le \frac{C^qq^q\lna^q}{(nv)^q}.
\end{equation}
Combining these inequalities, we get
\begin{align}\label{566}
 |H_{13}|\le \frac{Cq\lna^2}{(nv)^2}\E^{\frac{q-2}q}|\Lambda_n|^q+\frac {Cq\lna^q}{nv}\E^{\frac{q-1}q}|\Lambda_n|^q+\frac{C^qq^q}{(nv)^q}.
\end{align}

 Combining inequalities \eqref{546}, \eqref{555}, \eqref{560}, \eqref{566} and equality \eqref{558},  we obtain
 \begin{equation}
 \E|\Lambda_n(z)|^q\le \frac {Cq\lna^2}{(nv)^2}\E^{\frac{q-2}q}|\Lambda_n(z)|^q+\frac{Cq\lna}{nv}\E^{\frac{q-1}q}|\Lambda_n(z)|^q+\frac{C^qq^q\lna^q}{(nv)^q}.
 \end{equation}
Solving this inequality with respect to $\E|\Lambda_n(z)|^q$, we get
\begin{equation}
 \E|\Lambda_n(z)|^q\le \frac{C^qq^q\lna^q}{(nv)^q}=\frac{C^q\beta_n^q}{(nv)^q}
\end{equation}
Thus Proposition \ref{lambdasmal}  is proved.
\end{proof}
We return now to the proof of  Theorem \ref{stlambda}. Applying Chebyshev inequality and result of Proposition \ref{lambdasmal} with $q=A\log n$, we arrive
\begin{equation}
 \Pr\{|\Lambda_n|\ge \frac {\beta_n^2}{nv}\}\le \frac{\E|\Lambda_n|^q(nv)^q}{\beta_n^{2q}}\le \frac{C^q}{\beta_n^q}\le \exp\{-c\lna\}.
\end{equation}
The last inequality completes the proof of Theorem \ref{stlambda}









\section{Proof of Theorem \ref{eigenvector}}
Consider the singular value decomposition of the matrix $\mathbf X$. 
Let $\mathbf U$ and $\mathbf H$ be
unitary matrices of dimension $n\times n$ and $p\times p$ respectively. 
Let $\mathbf S$ be a $n\times n$ diagonal matrix whose entries are the   
singular value of the matrix $\mathbf X$. Let $\mathbf O_{p\times q}$ denote the  
$p\times q$-matrix  with zero entries. Introduce the matrix
$\widetilde{\mathbf S}=\begin{bmatrix}\mathbf S\quad\mathbf O_{n\times(p-n)}\end{bmatrix}$.
We have the following representation
\begin{equation}\label{SVD}
 \mathbf X=\mathbf U\widetilde{\mathbf S}\mathbf H^*.
\end{equation}
We may  represent the matrix $\mathbf H$ in the form
\begin{equation}
 \mathbf H=\begin{bmatrix}&\mathbf H_{11}&\mathbf H_{12}\\&\mathbf H_{21}
&\mathbf H_{22}\end{bmatrix},
\end{equation}
where $\mathbf H_{11}$ is $n\times n$ matrix , $\mathbf H_{22}$ is $(p-n)\times (p-n)$ matrix.
We introduce matrix
\begin{equation}
 \mathbf Z^*=\begin{bmatrix}&\frac1{\sqrt 2}\mathbf U^*&\frac1{\sqrt 2}
\mathbf H_{11}^*&\frac1{\sqrt 2}\mathbf H_{12}^*
\\&\frac1{\sqrt 2}\mathbf U^*
&-\frac1{\sqrt 2}\mathbf H_{11}^*&-\frac1{\sqrt 2}\mathbf H_{12}^*\\
            &\mathbf O&\mathbf H_{21}^*&\mathbf H_{22}^*\end{bmatrix}.
\end{equation}
It is straightforward to check that
\begin{equation}\label{eigenvektor10}
 \mathbf Z^*\mathbf V\mathbf Z=
\begin{bmatrix}&\mathbf S&\mathbf O_{n\times n}&\mathbf O_{n\times (p-n)}\\
                                &\mathbf O_{n\times n}&-\mathbf S&\mathbf O_{n\times (p-n)}\\
&\mathbf O_{(p-n)\times n}&\mathbf O_{(p-n)\times n}&\mathbf O_{(p-n)\times(p- n)}
                               \end{bmatrix},
\end{equation}
where $\mathbf S$ denotes diagonal matrix with  entries $s_j$.
The equality (\ref{eigenvektor10}) implies that the rows $\mathbf z_{j}$ of the matrix 
$\mathbf Z$, for $j=1,\ldots,n$,  are  the
eigenvectors of the matrix $\mathbf V$ corresponding to the eigenvalues $s_j$.
 Similarly, the  rows $\mathbf z_{j+n}$ of the  matrix $\mathbf Z$, for $j=1,\ldots,n$, 
are  the
eigenvectors of the  matrix $\mathbf V$ corresponding to the eigenvalues $-s_j$ and the 
rows $\mathbf z_{2n+l}$, for $l=1,\ldots,p-n$, are
the eigenvectors of matrix $\mathbf V$ corresponding to the eigenvalues $0$.
rjj8
We note the following representation for the diagonal entries of the resolvent matrix 
$\mathbf R$:
\begin{equation}\label{representeigenvector}
 R_{jj}=\sum_{k=1}^{n+p}\frac1{\lambda_k-z}|Z_{kj}|^2.
\end{equation}
Denote by $\lambda_1,\ldots,\lambda_{n+p}$  the eigenvalues of the  matrix $\mathbf V$ 
ordered in such way that
\begin{equation}
\lambda_j=\begin{cases}-s_j,\quad\text{if}\quad 1\le j\le n\\s_j,\quad\text{if}
\quad n+1\le j\le 2n\\0,\quad\text{if}\quad 2n\le j\le
n+p.\end{cases}
\end{equation}

Consider the  distribution function $F_{nj}(x)$ of the following  weighted empirical 
probability distribution on the eigenvalues  $\lambda_1, \ldots, \lambda_{n+p}$
\begin{equation}
 F_{nj}(x)=\sum_{k=1}^{n+p}|Z_{kj}|^2\mathbb I\{\lambda_k\le x\}.
\end{equation}
Then we have
\begin{equation}
 R_{jj}= R_{jj}(z)=\int_{-\infty}^{\infty}\frac1{x-z}dF_{nj}(x).
\end{equation}
which means  that $R_{jj}$ is the Stieltjes transform of the distribution $F_{nj}(x)$.
Note that, for any $\lambda>0$
\begin{equation}
 \max_{1\le k\le n+p}|Z_{kj}|^2\le \sup_x(F_{nj}(x+\lambda)-F_{nj}(x))=:Q_{nj}(\lambda).
\end{equation}
On the  other hand, it is easy to check that
\begin{equation}\label{concentration}
 Q_{nj}(\lambda)\le 2\sup_u\lambda\im R_{jj}(u+i\lambda).
\end{equation}
Furthermore, Corollary \ref{cru3} and inequality \ref{u1} together imply
together imply, for $v\ge v_0$
\begin{equation}\label{aaa}
 \Pr\{\im R_{jj}\le |R_{jj}|\le C\}\le\exp\{-c\lna\}.
\end{equation}
This implies that
\begin{equation}
 \Pr\{\max_{1\le k\le n+p}|Z_{kj}|^2>\frac{C\beta_n^4}{n}\}\le C\exp\{-c\4 \lna\}.
\end{equation}
By definition of $\mathbf H$, we obtain
\begin{equation}
 \Pr\{\max_{1\le j,k\le n}|u_{kj}|^2>\frac{C\beta_n^4}{n}\}\le C\exp\{-c\4 \lna\}.
\end{equation}
and
\begin{equation}
 \Pr\{\max_{1\le j,k\le p}|v_{kj}|^2> \frac{C\beta_n^4}{n}\}\le C\exp\{-c\4 \lna\}.
\end{equation}
By a  union bound, the inequality \eqref{deloc} follows.
To prove inequality \eqref{deloc1}, we consider the quantity
\begin{equation}
 r_j:=R_{jj}-S_y(z),\quad j=1,\ldots,n.
\end{equation}
By equality \eqref{lamb}, we have
\begin{equation}
 r_j=-S_y(z)(\Lambda_n(z)+\varepsilon_j)R_{jj}.
\end{equation}
By  Proposition \ref{prop10} and Lemma \ref{epsilon} , we have
\begin{equation}
\Pr\{ |r_j|\le \frac{C\beta_n^2}{\sqrt{nv}}\}\ge 1-\exp\{-c\lna^2\}.
\end{equation}
This implies that
\begin{equation}
 \Pr\{\int_{v'}^V|r_j(x+iv)|dv\le \frac {C\beta_n^2}{\sqrt n}
\}\ge 1-\exp\{-c\lna^2\}.
\end{equation}
Similar to \eqref{daleko} we get
\begin{equation}
 \int_{-\infty}^{\infty}|r_j(x+iV)|dx\le \frac {C\beta_n^2}{\sqrt n}.
\end{equation}
Applying Corollary \ref{smoothing1}, we finally obtain
\begin{equation}
 \Pr\{\sup_x|F_{nj}(x)-G_y(x)|\le \frac{C\beta_n^2}{\sqrt n}\}\ge 1-C\exp\{-c \4 \lna^2\}.
\end{equation}
In view of
\begin{equation}
\Pr\{\sup_x|F_{n}(x)-G_y(x)|\le \frac{C\beta_n^4}{ n}\}\ge 1-C\exp\{-c\4 \lna^2\},
\end{equation}
we get
\begin{equation}
\Pr\{\sup_x|F_{nj}(x)-G_y(x)|\le \frac{C\beta_n^2}{\sqrt n}\}\ge 1-C\exp\{-c\4 \lna^2\}.
\end{equation}
The last two inequalities together imply that
\begin{equation}
 \Pr\{\sup_x|F_{nj}(x)-F_n(x)(x)|\le \frac{C\beta_n^2}{\sqrt n}\}\ge 1-C\exp\{-c\4 \lna^2\}.
\end{equation}
Note that $F_n(x)$ is the distribution function of a random variable which is uniformly  
distributed on the set  $\{\pm s_1,\ldots,\pm s_n\}$ and
\begin{equation}
 \sup_x|F_{nj}(x)-F_n(x)|=\max_k\Big|\sum_{l=1}^k|u_{lj}|^2-\frac kn\Big|.
\end{equation}
Thus Theorem \ref{eigenvector} is proved.
\section{Appendix}
\subsection{Proof of Remark \ref{localization}}\label{remark}
\begin{Proof of} {\it Remark \ref{localization}.}We consider the case $y<1$ only. For the case $y=1$ see \cite{GT:11}, Remark 1.2.
Let $\alpha=G_y(1+y)$. Let $x\in[0,\alpha]$.
Denote  by $\tau$  a random variable which is uniformly distributed on $[0,1]$ and use
  Taylor's formula to show
 \begin{equation}\label{tailor}
  G^{-1}(x)=a+ \E_{\tau}\frac{2\pi xG_y^{-1}(x\tau)}{\sqrt{(b-(G_y^{-1}(x\tau))(G_y^{-1}(x\tau)-a)}}.
 \end{equation}
 By monotonicity of $\sqrt{(b-(G_y^{-1}(x\tau))(G_y^{-1}(x\tau)-a)}$ for $x\in [0,\alpha]$ and $G^{-1}_y(x)\ge a$, we get
 \begin{equation}
 G_y^{-1}(x)-a \ge \frac{C xa}{\sqrt{(b-(G_y^{-1}(x))(G_y^{-1}(x)-a)}}.
 \end{equation}
 There is another
   constant $C>0$ depending on $y$ such that
 \begin{equation}
  G_y^{-1}(x)-a\ge C x^{\frac23}.
 \end{equation}
From the last inequality we get
\begin{equation} \label{onethird}
\sqrt{G_y^{-1}(x\tau)-a} \ge c (\tau x)^{1/3}
\end{equation}
and hence by  \eqref{tailor} it follows that
\begin{equation}
G_y^{-1}(x)-a \le  c'x^{\frac23}\E_{\tau}\frac1{\tau^{\frac13}}\le C_2x^{\frac23},
\end{equation}
with some  constants $c',C_2>0$ depending on $y$ only.
Similarly  for $x\in[\alpha,1]$ we get
\begin{equation}
 C_1(1-x)^{\frac23}\le b-G^{-1}_y(x)\le C_2(1-x)^{\frac23}.
\end{equation}
Summarizing, we may write for another  constant $C_1$, depending  $y$ only,
\begin{equation}\label{new1}
 C_1\min\{x^{\frac23},(1-x)^{\frac23}\}\le (b-(G_y^{-1}(x\tau))(G_y^{-1}(x\tau)-a)\le C_1\min\{x^{\frac23},(1-x)^{\frac23}\}.
\end{equation}
Furthermore,
 \begin{equation}
  \Delta_n^*=\sup_x|\mathcal F_n(x)-G_y(x)|=\max_{1\le j\le n}|\mathcal F_n(s_j^2)-G_y(s_j^2)|=
  \max_{1\le j\le n}|\frac jn-G_y(s_j^2)|.
 \end{equation}
This implies that, for $s_j^2\in[a,b]$ and $\abs{\theta} \le 1$,
\begin{equation}
 s_j^2=G^{-1}_y(\frac jn+\theta\Delta_n^*).
\end{equation}
By Taylor's formula we have
\begin{equation}
 G^{-1}_y(\frac jn+\theta\Delta_n^*)=G^{-1}_y(\frac jn)+\E_{\tau}\frac{2\pi\theta\Delta_n^*G^{-1}(\frac kn+\tau\theta\Delta_n^*)}
 {\sqrt{(b-(G^{-1}(\frac kn+\tau\theta\Delta_n^*))(G^{-1}(\frac kn+\tau\theta\Delta_n^*)-a)}}.
\end{equation}
Consider first the case $2\Delta_n^*\le \frac jn\le \alpha-\Delta_n^*$.
Then by  \eqref{new1},
\begin{equation}
 \sqrt{(b-G^{-1}_y(\frac kn+\tau\theta\Delta_n^*))(G^{-1}_y(\frac kn+\tau\theta\Delta_n^*)-a)}\ge C|\frac jn+\tau\theta\Delta_n^*|^{\frac13}\ge
 C'(\frac jn)^{\frac13}.
\end{equation}
From here it follows that by Theorem \ref{main},  with probability
$1-C \exp\{-c l_{n,\alpha}\}$,
\begin{equation}
 |s_j^2-\gamma_{nj}|\le C\beta_n^4n^{-\frac23}j^{-\frac13}.
\end{equation}
Similar we get, for $2\Delta_n^*\le \frac{n-j}n\le \alpha-\Delta_n^*$,
\begin{equation}
 |s_j^2-\gamma_{nj}|\le C\beta_n^4n^{-\frac23}(n-j)^{-\frac13}.
\end{equation}

Thus Remark \ref{localization} is proved.
  \end{Proof of}
  \subsection{The proof of Proposition \ref{smoothing}}\label{profofsmoothing}
  
  \subsubsection{Auxiliary lemmas}We prove first several lemmas about behavior of distribution function of Marchenko--Pastur law and its Stieltjes transform.
\begin{lem}\label{densitymp} Let $0<y<1$.
 Let $x:\4|x|\in[1-\sqrt y,1+\sqrt y]$ and let 
$\gamma:=\gamma(x)=\min\{|x|-1+\sqrt y,1+\sqrt y-|x|$.
Then, for $0<y<1$, 
\begin{equation}
 |G_y'(x)|\le \frac{3\gamma}{\pi\sqrt{y(1-\sqrt y)}}.
\end{equation}
\end{lem}
\begin{proof}
 By equality \eqref{march}, we have
\begin{equation}
G'_y(x)=\frac{\sqrt{(-1+\sqrt y)^2-x^2)((1+\sqrt y)-x^2)}}{2\pi y|x|}
\mathbb I\{1-\sqrt y\le |x|\le 1+\sqrt y\}.
\end{equation}
Assume for the definitely that $x=-1+\sqrt y-\gamma$. Note that $0\le\gamma\le \sqrt y<1$.
It is straightforward to check that
\begin{align}
 x-1+\sqrt y&=-2+2\sqrt y-\gamma,\quad x+1-\sqrt y=-\gamma,\notag\\
 1+\sqrt y-x&=2+\gamma,\quad 1+\sqrt y+x=2\sqrt y-\gamma.
\end{align}
We may write
\begin{equation}
G'_y(x)=\frac{\sqrt{2\gamma( 1-\sqrt y+\frac12\gamma)(2+\gamma)(2\sqrt y+\gamma)}}
{2\pi y(1-\sqrt y+\gamma)}
\le \frac{3\sqrt{\gamma}}{\pi\sqrt{y(1-\sqrt y)}}.
\end{equation}
Similar we consider the cases $x=-1-\sqrt y+\gamma$, $x=1-\sqrt y+\gamma$, 
and $x=1+\sqrt y-\gamma$.
Thus Lemma \ref{densitymp} is proved.
\end{proof}
\begin{lem}\label{supremum} For any distribution function $F$ and for any 
$\frac12\sqrt y>\varepsilon>0$ the following inequality holds
\begin{equation}
 \sup_x|F(x)-\widetilde G_y(x)|\le \sup_{x:\4|x|
\in[1-\sqrt y+\varepsilon,1+\sqrt y-\varepsilon]}|F(x)-\widetilde G_y(x)|+
\frac{2\varepsilon^{\frac32}}{\pi\sqrt{y(1-\sqrt y)}}.
\end{equation}

\end{lem}
\begin{proof}Recall that $\mathbb J_{\varepsilon}
=[1-\sqrt y+\varepsilon,1+\sqrt y-\varepsilon]$. Note that
\begin{align}
 \sup_x&|F(x)-\widetilde G_y(x)|=\sup_{x:\4|x|\in[1-\sqrt y,1+\sqrt y]}
|F(x)-\widetilde G_y(x)|\notag\\&=
\max\Bigg\{\sup_{x\in[-1-\sqrt y,-1-\sqrt y+\varepsilon]}|F(x)-\widetilde G_y(x)|,
\sup_{x\in[-1+\sqrt y-\varepsilon,-1+\sqrt y]}|F(x)-\widetilde G_y(x)|,
\notag\\&\qquad\qquad\quad\sup_{x\in[1-\sqrt y,1-\sqrt y+\varepsilon]}
|F(x)-\widetilde G_y(x)|,
\sup_{x\in[1+\sqrt y-\varepsilon,1+\sqrt y]}|F(x)-\widetilde G_y(x)|,
\notag\\&\qquad\qquad\qquad\qquad\sup_{x:\4|x|\in\mathcal J_{\varepsilon}}
|F(x)-\widetilde G_y(x)|\Bigg\}.
\end{align}
Futhermore, for $x\in[-1-\sqrt y,-1-\sqrt y+\varepsilon]$, we have
\begin{align}\label{ineq2.7}
 -\widetilde G_y(-1-\sqrt y+\varepsilon)&\le F(x)-\widetilde G_y(x)\notag\\&
\le F(-1-\sqrt y+\varepsilon)-\widetilde G_y(-1-\sqrt y+\varepsilon)
+\widetilde G_y(-1-\sqrt y+\varepsilon).
\end{align}
Inequality \eqref{ineq2.7} implies that
\begin{equation}\notag
\sup_{x\in[-1-\sqrt y,-1-\sqrt y+\varepsilon]}|F(x)-\widetilde G_y(x)|\le \sup_{|x|
\in\mathcal J_{\varepsilon}'}|F(x)-\widetilde G_y(x)|+ 
\widetilde G_y(-1-\sqrt y+\varepsilon).
\end{equation}
Similar we get
\begin{equation}\notag
 \sup_{x\in[1+\sqrt y-\varepsilon,1+\sqrt y]}|F(x)-\widetilde G_y(x)|
\le \sup_{|x|\in\mathcal J_{\varepsilon}}|F(x)-\widetilde G_y(x)|+ 
1-\widetilde G_y(1+\sqrt y-\varepsilon).
\end{equation}
Furthermore, for $x\in[-1+\sqrt y-\varepsilon,-1+\sqrt y]$ we have
\begin{align}\label{ineq2.8}
 F(-1+\sqrt y-\varepsilon)&-\widetilde G_y(-1+\sqrt y-\varepsilon)
-(\widetilde G_y(-1+\sqrt y)-\widetilde G_y(-1+\sqrt y-\varepsilon))
\notag\\&\le F(x)-\widetilde G_y(x)
\notag\\   \le F(1-\sqrt y+\varepsilon)&-\widetilde G_y(1-\sqrt y+\varepsilon)
+(\widetilde G_y(1-\sqrt y+\varepsilon)
-\widetilde G_y(1-\sqrt y)).
\end{align}
We use here that $\widetilde G(-1+\sqrt y)=1-\widetilde G_y(1-\sqrt y)$. 
Inequality \eqref{ineq2.8} implies 
\begin{equation}\notag
\sup_{x\in[-1-\sqrt y,-1-\sqrt y+\varepsilon]}|F(x)-\widetilde G_y(x)|
\le \sup_{|x|\in\mathcal J_{\varepsilon}}|F(x)-\widetilde G_y(x)|+ 
\widetilde G_y(1-\sqrt y+\varepsilon)-\widetilde G_y(1-\sqrt y). 
\end{equation}
Similar we get
\begin{equation}\notag
\sup_{x\in[1-\sqrt y,1-\sqrt y-\varepsilon]}|F(x)-\widetilde G_y(x)|
\le \sup_{|x|\in\mathcal J_{\varepsilon}'}|F(x)-\widetilde G_y(x)|+ 
 \widetilde G_y(1-\sqrt y+\varepsilon)-\widetilde G_y(1-\sqrt y).
\end{equation}
We use here that for $\widetilde G_y(x)$
\begin{equation}
\widetilde G_y(1-\sqrt y+\varepsilon)-\widetilde G_y(1-\sqrt y)= 
 \widetilde G_y(-1+\sqrt y)-\widetilde G_y(-1+\sqrt y-\varepsilon).                   
\end{equation}
These relations together imply
\begin{align}\notag
 \sup_x|F(x)-\widetilde G_y(x)|&\le\sup_{|x|\in\mathcal J_{\varepsilon}'}
|F(x)-\widetilde G_y(x)|\notag\\&
+ \max\{\widetilde G_y(-1-\sqrt y+\varepsilon),
\widetilde G_y(-1+\sqrt y)-\widetilde G_y(-1+\sqrt y-\varepsilon)\}.
\end{align}
We note as well that, by Lemma \ref{densitymp},
\begin{equation}
\max\{\widetilde G_y(-1-\sqrt y+\varepsilon),
\widetilde G_y(-1+\sqrt y)-\widetilde G_y(-1+\sqrt y-\varepsilon)\}
\le \frac{2\varepsilon^{\frac32}}{\pi\sqrt{y(1-\sqrt y)}}.
\end{equation}
 
\end{proof}

  \subsubsection{The Proof of Proposition}
\begin{Proof of} {\it Proposition \ref{smoothing}} Without loss of generality we may assume that $0<y<1$. The case $y=1$ is 
considered in \cite{GT:11}.
The proof of Proposition \ref{smoothing} is an adaption  of  the proof of Proposition 4.1 
from  \cite{GT:11}. We provide it here for completeness.
By Lemma \ref{supremum} in Appendix, we have
\begin{equation}\label{gav1}
\sup_x|F(x)-\widetilde G_y(x)|\le\sup_{|x|\in\mathcal J_{\varepsilon}'}|F(x)-
\widetilde G_y(x)|+\frac{2}{\pi\sqrt{y(1-y)}}\4\varepsilon^{\frac32}. 
\end{equation}
Let $x\in\mathbb J_{\varepsilon}$. Recall that $\gamma=\min\{|x|-1+\sqrt y,1+\sqrt y-|x|\}$. 
Then,  according to condition \eqref{avcond} we have
$x+\frac{vH}{\sqrt{\gamma}}\in\mathbb J'_{\varepsilon}$.
Denote by $v'=\frac v{\sqrt{\gamma}}$. For any $x\in \mathbb J'_{\varepsilon}$, we have
\begin{align}
 \Big|\frac1{\pi}&\im\Big(\int_{-\infty}^x(S_F(u+iv')-S_{\widetilde G_y}(u+iv'))du\Big)
\Big|\notag\\&\ge \frac1{\pi}
\im\Big(\int_{-\infty}^x(S_F(u+iv')-S_{\widetilde G_y}(u+iv'))du\Big)
\notag\\&=\frac1{\pi}\int_{-\infty}^x\left[\int_{-\infty}^{\infty}
\frac{v'd(F(t)-\widetilde G_y(t))}{(t-u)^2+{v'}^2}\right]du\notag\\&=
\frac1{\pi}\int_{-\infty}^x\left[\int_{-\infty}^{\infty}
\frac{2v'(t-u)(F(t)-\widetilde G_y(t))dt}{((t-u)^2+{v'}^2)^2}\right]\notag\\&=
\frac1{\pi}\int_{-\infty}^{\infty}(F(t)-\widetilde G_y(t))\left[\int_{-\infty}^x
\frac{2v'(t-u)du}{((t-u)^2+{v'}^2)^2}dt\right]\notag\\&=
\frac1{\pi}\int_{-\infty}^{\infty}\frac{(F(x-v't)-\widetilde G_y(x-v't))dt}{t^2+1}.
\end{align}
Since $F$ is non decreasing, we obtain
\begin{align}
 \frac1{\pi}&\int_{|t|\le H}\frac{(F(x-v't)-\widetilde G_y(x-v't))dy}{t^2+1}\notag\\&
\ge
{\tau}(F(x-v'H)-\widetilde G_y(x-v'H))-\frac1{\pi}\int_{|t|\le H}|
\widetilde G_y(x-v't)-\widetilde G_y(x-v'H))|dt\notag\\&\ge
\tau(F(x-v'H)-\widetilde G_y(x-v'H))-\frac1{v'\pi}\int_{|t|\le v'H}|\widetilde G_y(x-t)-
\widetilde G_y(x-v'H))|dt .
\end{align}
Moreover, by inequality \eqref{gav1}, we have
\begin{align}
\Big| \frac1{\pi}\int_{|t|> H}&\frac{(F(x-v't)-\widetilde G_y(x-v't))dy}{t^2+1}\Big|
\le (1-\tau)\Delta(F,\widetilde G_y).
\end{align}
 Let   $\Delta_{\varepsilon}(F,\widetilde G_y)=\sup_{x\in\mathbb J_{\varepsilon}}
|F(x)-\widetilde G_y(x)|$ and let
$x_n\in\mathbb J_{\varepsilon}$ such that
$F(x_n)-\widetilde G_y(x_n)\to\Delta_{\varepsilon}(F,\widetilde G)$. Then $x_n'=x_n+v'a
\in\mathbb J'_{\varepsilon}$. We have
\begin{align}\label{gav2}
 \sup_{x\in\mathbb J'_{\varepsilon}}&\left|\im\int_{-\infty}^x(S_F(u+iv')
-S_{\widetilde G_y}(u+iv'))du\right
|\ge \tau(F(x_n)-\widetilde G_y(x_n))\notag\\&-\frac1{\pi v}
\sup_{x\in\mathbb J'_{\varepsilon}}\sqrt{\gamma}\int_{|t|<2v'H}|\widetilde G_y(x+t)
-\widetilde G_y(x)|dt
-(1-\tau)\Delta(F,\widetilde G_y).
\end{align}
Furthermore, assume for the definitely that $t\ge 0$. Using Lemma \ref{densitymp} 
in Appendix, we get
\begin{equation}
 |\widetilde G_y(x+t)-\widetilde G_y(x)|\le |t|\sup_{u\in[x,x+t]}\widetilde G'_y(u)
\le \frac{2|t|\sqrt{\gamma+t}}{\pi\sqrt{1-\sqrt y}}
\le \frac{2|t|\sqrt{\gamma+\varepsilon}}{\pi\sqrt{y(1-\sqrt y)}},
\end{equation}
for $|t|\le 2v'H\le \varepsilon$.
This implies after integrating
\begin{align}\label{gav3}
\frac1{\pi v} \sup_{x\in\mathbb J'_{\varepsilon}}&\sqrt{\gamma}\int_{|t|<2v'H}|
\widetilde G_y(x+t)-\widetilde G_y(x)|dt\notag\\&
\le \frac {2H^2v}{\pi^2\sqrt{y(1-\sqrt y)}} \sup_{x\in\mathbb J'_{\varepsilon}}
\frac{\sqrt{\gamma+\varepsilon}}{\sqrt{\gamma}}\le 
\frac {2H^2\sqrt 3v}{\pi^2\sqrt{y(1-\sqrt y)}}.
\end{align}
We use here that for $|x|\in\mathcal J'_{\varepsilon}$ the inequality 
$\gamma\ge\frac12\varepsilon$ holds.
Inequalities (\ref{gav1}), (\ref{gav2}) and (\ref{gav3}) together imply
\begin{align}\label{Deltaeps}
 \sup_{x\in\mathbb J'_{\varepsilon}}&\left|\im\int_{-\infty}^x(S_F(u+iv')
-S_{\widetilde G_y}(u+iv'))du\right|\notag\\
&\ge(2\tau-1)\Delta(F,\widetilde G_y)-\frac12C_1v-(1-\tau)C_2\varepsilon^{\frac32},
\end{align}
where $C_1=\frac {2H^2\sqrt 3}{\pi^2\sqrt{y(1-\sqrt y)}}$ and 
$C_2=\frac{2}{\pi\sqrt{y(1-\sqrt y)}}$.
Similar arguments may be used to prove this inequality 
in case  there is a sequence $x_n\in\mathbb J_{\varepsilon}$ such $F(x_n)-G(x_n)\to
-\Delta_{\varepsilon}(F,G)$. 
In view of \eqref{Deltaeps} and $2\alpha-1=1/2$ this completes the proof.
\end{Proof of}
\subsection{Some inequalities for  the Stieltjes transform of Marchenko - Pastur distribution}

\begin{lem}\label{stmp} For Stieltjes transform $\widetilde S_y(z)$ 
of symmetrizing Marchenko--Pastur distribution
 the following inequalities hold
\begin{align}\label{new1}
 |S_y(z)|\le \frac1{\sqrt y},\quad |z+\frac{y-1}z+yS_y(z)|\ge {\sqrt y}.
\end{align}
 Moreover,
\begin{equation}\label{new2}
|z+yS_y(z)|\ge C_1(y)=\begin{cases}1,\text{ for }y=1,\\
\frac1{1+2\sqrt y},\text{ for } 0<y<1.\end{cases}.
\end{equation}
\end{lem}
\begin{proof}Let $\widehat S_y(z)=\frac{-z-\frac{y-1}z-\sqrt{(z+\frac{y-1}z)^2-4y}}{2y}$. 
Note that $S_y(z)$ and $\widehat S_y(z)$ are the roots of equation
\begin{equation}
yS_y(z)^2+(z+\frac{y-1}z)S_y(z)+1=0.
\end{equation}
From here it follows
\begin{equation}\label{equa120}
 |S_y(z)||\widehat S_y(z)|=\frac1y.
\end{equation}
Similar to \cite{Bai:93}, Section 3, we note that
\begin{equation}
 \text{\rm sign}\re\{z+\frac {y-1}z\}=\text{\rm sign}\{\re\sqrt{(z+\frac{y-1}z)^2-4y}\}.
\end{equation}
(For more details see \cite{Bai:93}, pp. 631--632.)
 This implies that
\begin{equation}\label{ineq120}
 |S_y(z)|\le|\widehat S_y(z)|.
\end{equation}
Inequality \eqref{ineq120} and equality \eqref{equa120} together imply \eqref{new1}. To prove \eqref{new2} introduce notation
\begin{align}
w _y(z):==y+zS_y(z).
\end{align}
It straightforward to check 
\begin{equation}\label{new0}
\frac1{ w_y(z)}=-yS_y(z)-\frac{y-1}{z}.
\end{equation}
Relations \eqref{new1} imply, for $0<y<1$ and $1-\sqrt y\le |z|\le 1+\sqrt y$,
\begin{equation}
|-yS_y(z)-\frac{y-1}{z}|\le \sqrt y+\Big|\frac{1-y}z\Big|\le 1+2\sqrt y.
\end{equation}
The last inequality and relation implies, that for 
$1-\sqrt y\le |z|\le\sqrt{1-y}$ and for $0<y<1$,
\begin{equation}
|w_y(z)|\ge \frac1{1+2\sqrt y}.
\end{equation}
For $y=1$,
\begin{equation}
|w_y(z)|\ge 1.
\end{equation}
Thus Lemma \ref{stmp} is proved.
\end{proof}
\begin{lem}\label{sign}
 For $z=u+iv$ with $1-\sqrt y\le|u|\le 1+\sqrt y$ and $v>0$ the following relation holds
\begin{equation}
 \re\{(z+\frac{y-1}z)^2-4y\}\le0.
\end{equation}
\end{lem}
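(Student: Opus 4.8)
The plan is to reduce the statement to an elementary inequality in the two real variables $u^2$ and $v^2$. Writing $z=u+iv$ and $c:=1-y\in[0,1)$, one expands
\[
\Bigl(z+\frac{y-1}z\Bigr)^2=z^2-2c+\frac{c^2}{z^2}.
\]
Since $\re(z^2)=u^2-v^2$ and $\re(z^{-2})=(u^2-v^2)/|z|^4$, taking real parts and using $-2c-4y=-(4-2c)$ gives
\[
\re\Bigl\{\bigl(z+\tfrac{y-1}z\bigr)^2-4y\Bigr\}
=(u^2-v^2)\Bigl(1+\frac{c^2}{(u^2+v^2)^2}\Bigr)-(4-2c),
\]
and since $4-2c=2(1+y)$ the claim is equivalent to
\[
(u^2-v^2)\Bigl(1+\frac{c^2}{(u^2+v^2)^2}\Bigr)\le 2(1+y)\qquad\text{whenever } u^2\in[a,b],
\]
where $a=(1-\sqrt y)^2$ and $b=(1+\sqrt y)^2$.

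Next I would show that, for fixed $p:=u^2$, the left-hand side is at most $g(p):=p+c^2/p$ for every $v>0$. If $v^2\ge u^2$ the left-hand side is $\le 0\le g(p)$. If $0<v^2<u^2$, regard the left-hand side as a function of $q:=v^2$ on $(0,p]$; its derivative there equals $-(1+c^2/(p+q)^2)-2c^2(p-q)/(p+q)^3<0$, both summands being negative on that range, so the left-hand side is decreasing in $q$ and hence bounded by its value as $q\to0^+$, which is exactly $g(p)$.

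It then remains to check $g(p)\le 2(1+y)$ for $p\in[a,b]$. The algebraic identity $c^2=(1-y)^2=(1-\sqrt y)^2(1+\sqrt y)^2=ab$ yields $g(a)=a+c^2/a=a+b=2(1+y)$ and, likewise, $g(b)=b+c^2/b=b+a=2(1+y)$. Since $g$ is decreasing on $(0,c)$ and increasing on $(c,\infty)$, and $c\in[a,b]$ because $(1-\sqrt y)^2\le 1-y\le(1+\sqrt y)^2$, the maximum of $g$ over $[a,b]$ is attained at the endpoints and equals $2(1+y)$. Combining the three steps proves the lemma; the case $y=1$ is the degenerate case $c=0$, where $g(p)=p\le b=4=2(1+y)$ directly.

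The step I expect to be the main obstacle is the reduction to $g(p)$: one cannot estimate the two factors $(u^2-v^2)$ and $(u^2-v^2)c^2/(u^2+v^2)^2$ separately, since each can be as large as $b=(1+\sqrt y)^2$ and their sum $2(1+\sqrt y)^2$ strictly exceeds the required bound $2(1+y)$. The monotonicity argument that pushes $v$ down to $0$, combined with the fact that the resulting one-variable bound $g$ happens to take the same value $2(1+y)$ at both endpoints $a$ and $b$, is precisely what makes the inequality tight.
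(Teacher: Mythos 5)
Your proof is correct and follows essentially the same route as the paper: both reduce $\re\bigl\{\bigl(z+\frac{y-1}{z}\bigr)^2-4y\bigr\}$ to the identity $(u^2-v^2)\bigl(1+\frac{(1-y)^2}{|z|^4}\bigr)-2(1+y)$, bound this above by $u^2+\frac{(1-y)^2}{u^2}-2(1+y)$, and then show the resulting one-variable expression is nonpositive on $[(1-\sqrt y)^2,(1+\sqrt y)^2]$. The only cosmetic differences are that you justify the reduction to $v=0$ by an explicit monotonicity-in-$v^2$ computation and close via the factorization $c^2=ab$ together with convexity of $p\mapsto p+c^2/p$, whereas the paper asserts the bound directly and finishes by solving the quadratic $t^2-2(1+y)t+(1-y)^2=0$ for its roots $(1\pm\sqrt y)^2$.
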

\begin{proof}
 It straightforward to check
\begin{align}\label{re1}
A:= \re\{(z+\frac{y-1}z)^2-4y\}=u^2(1-\frac{(1-y)}{|z|^2})^2-4 y-v^2(1+\frac{1-y}{|z|^2})^2.
\end{align}
We rewrite this equality as 
\begin{equation}
 A=(u^2-v^2)(1+\frac{(1-y)^2}{|z|^4})-2(1+y)\le u^2+\frac{(1-y)^2}{|u|^2}-2(1+y).
\end{equation}
Denote by $t=u^2$ and consider the equation
\begin{equation}
 t^2-2(1+y)t+(1-y)^2=0.
\end{equation}
Solving it, we find
\begin{equation}
 t_{1,2}=(1+y)\pm\sqrt{4y}=(1\pm\sqrt y)^2.
\end{equation}
This immediately implies that $A\le 0$, for $1-\sqrt y\le |u|\le 1+\sqrt y$.
Thus Lemma \ref{sign} is proved.
\end{proof}
\begin{lem}\label{imsqrt}
 For any $z=u+iv$ with $1-\sqrt y\le|u|\le 1+\sqrt y$,  the following inequality holds
\begin{equation}
 \im{\sqrt{(z+\frac{y-1}z)^2-4y}}\ge\frac12|\sqrt{(z+\frac{y-1}z)^2-4y}|
\ge \frac{y^{\frac 14}}{2}\sqrt{\gamma+v}. 
\end{equation}
 \end{lem}
\begin{proof}
 By Lemma \ref{sign}, for $z=u+iv$ with $1-\sqrt y\le|u|\le 1+\sqrt y$, 
we get $\re((z+\frac{y-1}z)^2-4y)\le0$ and 
$\frac{\pi}2\le\arg((z+\frac{y-1}z)^2-4y)\le \frac{3\pi}2$. 
Therefore, 
\begin{equation}\label{error30} 
 \im\{\sqrt{(z+\frac{y-1}z)^2-4y}\}\ge\frac1{\sqrt 2}|(z+\frac{y-1}z)^2-4y|^{\frac12}. 
\end{equation} 
Furthermore, we have
\begin{equation}
 (z+\frac{y-1}z)^2-4y=\frac{(z+1+\sqrt y)(z+1-\sqrt y)(z-1+\sqrt y)(z-1-\sqrt y)}{z^2}.
\end{equation}
Let $u=-1-\sqrt y+\gamma$. Then, $|z|\ge 1$ and 
\begin{align}\label{tr1}
 |(z+\frac{y-1}z&)^2-4y|\ge \frac1{2|z|^2}(\gamma+v)|z-1+\sqrt y||z+1-\sqrt y||z-1-\sqrt y|
\notag\\&\ge \frac1{2|z|^2}(\gamma+v)|-2+\gamma+iv||-2\sqrt y+\gamma+iv||-2-2\sqrt y+\gamma+iv|.
\end{align}
Note that
\begin{equation}
 \frac{|-2+\gamma+iv|}{|z|}=\sqrt{\frac{(2-\gamma)^2+v^2}{(1+\sqrt y-\gamma)^2+v^2}}\ge 1,
\end{equation}
and
\begin{equation}
 \frac{|-2-2\sqrt y+\gamma+iv|}{|z|}=\sqrt{\frac{(2+2\sqrt y-\gamma)^2+v^2}
{(1+\sqrt y-\gamma)^2+v^2}}\ge 1.
\end{equation}
These inequalities together imply
\begin{equation}\label{tr10}
 |(z+\frac{y-1}z)^2-4y|\ge \frac{\sqrt y}2(\gamma+v).
\end{equation}
For $u=-1+\sqrt y-\gamma$, we have $|z|\ge 1-\sqrt y+\gamma$ and
\begin{equation}
 |(z+\frac{y-1}z)^2-4y|\ge \frac1{2|z|^2} (\gamma+v)|2\sqrt y-\gamma+iv||-2
+2\sqrt y-\gamma+iv||-2-\gamma+iv|.
\end{equation}
Note that
\begin{equation}
\frac{(2(1-\sqrt y)+\gamma)^2+v^2}{(1-\sqrt y+\gamma)^2+v^2}\ge 1,
\end{equation}
and
\begin{equation}
\frac{(2+\gamma)^2+v^2}{(1-\sqrt y+\gamma)^2+v^2}\ge 1.
\end{equation}
These inequalities imply 
\begin{equation}
|(z+\frac{y-1}z)^2-4y|\ge \frac{\sqrt y}2(\gamma+v). 
\end{equation}
Similar we consider $u=1-\sqrt y+\gamma$ and $u=1+\sqrt y-\gamma$.
Thus Lemma \ref{imsqrt} is proved.
\end{proof}
\subsection{The estimations of of the error terms}
 We shall use McDiarmid inequality for martingales and sums of independent random variables in the following form
\begin{lem}\label{diarmid}
Let $\textfrak N_0=\{\emptyset,\Omega\}\subset\textfrak N_1\subset\cdots\subset
\textfrak N_n\subset\textfrak M$ be a family of
 sub-$\sigma$-algebras of the measurable space $\{\Omega,\textfrak M\}$ and 
let $M_n=\xi_1+\ldots +\xi_n$ be a martingale with bounded differences
$\xi_j=M_j-M_{j-1}$ such that
$$
\Pr\{|\xi_j|\le b_j\}=1, \quad\text{for}\quad j=1,\ldots,n.
$$
Then, for $x>\sqrt8$,
\begin{equation}
 \Pr\Big\{|M_n|\ge x\Big\}\le c(1-\Phi(\frac x{\sigma}))=\int_{\frac x{\sigma}}^{\infty}
\varphi(t)dt,
\quad\varphi(t)=\frac1{\sqrt{2\pi}}\exp\{\frac{-t^2}2\}
\end{equation}
with some numerical constant $c>0$ and $\sigma^2=b_1^2+\cdots+b_n^2.$
\end{lem}

\begin{proof}
 The result follows from  Theorem 1.1 \cite{Bentkus:2007}.
\end{proof}\begin{prop}\label{epsilon}Under  the conditions of  Theorem \ref{main} there exist absolute constants $C$ and $c$ such that, for $v\ge v_0$ and $j=1,\ldots,n+p$, 
\begin{equation}\notag
\Pr\{|\varepsilon_j|\ge \frac{C \lna^{2+\frac2{\varkappa}}\sqrt{\im m_n^{(j)}(z)+\frac1{nv}}}{\sqrt{nv}}\}\le\exp\{-c\lna^2\}
\end{equation}
\end{prop}
\begin{Proof of} {\it Proposition \ref{epsilon}} We divide the proof into three parts, which formulate as Lemmas.
     We start with $\varepsilon_{j1}$, for $j=1,\ldots,n$. Recall that
     \begin{equation}\notag
      \varepsilon_{j1}=\frac1p\sum_{l=1}^p(X_{jl}^2-1)R^{(j)}_{l+n,l+n}.
     \end{equation}
     \begin{lem}\label{eps1*}Under  the conditions of  Theorem \ref{main} there exist absolute constants $C$ and $c$ such that, for $v\ge v_0$ and $j=1,\ldots,n+p$, 
\begin{equation}\notag
\Pr\{|\varepsilon_{j1}|\ge \frac{C \lna^{\frac2{\varkappa}+2}\sqrt{\im m_n^{(j)}(z)+\frac1{nv}}}{\sqrt{nv}}\}\le\exp\{-c\lna^2\}
\end{equation}
     \end{lem}
     \begin{proof}
Note that $\xi_l=X_{jl}^2-1$ are independent for $l=1,\ldots,p$ and $|\xi_l|\le C\lna^{\frac2{\varkappa}}$. Moreover, $\xi_l$ are independent on $R^{(j)}_{l+n,l+n}$.
Conditioning $\mathfrak M^{(j)}$ and applying MxDiarmid inequality(see Lemma \ref{diarmid} in Appendix) with $\sigma^2=\frac{C\lna^{\frac4{\varkappa}}\im m_n^{(j)}(z)}{nv}$ and
$x=\frac{C\lna^{\frac2{\varkappa}+2}\im^{\frac12} m_n^{(j)}(z)}{\sqrt{nv}}$, we get
\begin{equation}\label{eps1*}
\Pr\{|\varepsilon_{j1}|\ge \frac{C \lna^{\frac2{\varkappa}+2}\sqrt{\im m_n^{(j)}(z)+\frac1{nv}}}{\sqrt{nv}}\}\le\exp\{-c\lna^2\}.
\end{equation}
Similar we get the bound for $\varepsilon_{l+n,1}$, for $l=1,\ldots,p$,
\begin{equation}\label{eps2}
\Pr\{|\varepsilon_{l+n,1}|\ge \frac{C \lna^{\frac2{\varkappa}+2}\sqrt{\im m_n^{(j)}(z)+\frac1{nv}}}{\sqrt{nv}}\}\le\exp\{-c\lna^2\}.
\end{equation}
\end{proof}
Furthermore, consider $\varepsilon_{j2}$, for $j=1,\ldots,n$.
Recall that
\begin{equation}\notag
 \varepsilon_{j2}=\frac1p\sum_{1\le r\ne l\le p}X_{jl}X_{jr}R^{(j)}_{r+n,l+n}.
\end{equation}
\begin{lem}\label{eps2*} Under  the conditions of  Theorem \ref{main} there exist absolute constants $C$ and $c$ such that, for $v\ge v_0$ and $j=1,\ldots,n+p$, 
\begin{equation}\notag
\Pr\{|\varepsilon_{j2}|\ge \frac{C \lna^{\frac2{\varkappa}+2}\sqrt{\im m_n^{(j)}(z)+\frac1{nv}}}{\sqrt{nv}}\}\le\exp\{-c\lna^2\}
\end{equation}
\end{lem}
\begin{proof}
Conditioning $\mathfrak M^{(j)}$, we may represent $\varepsilon_{j2}$ as martingale
\begin{equation}\notag
 \varepsilon_{j2}=\sum_{t=1}^pY_t,
\end{equation}
with martingale-difference
\begin{equation}\notag
 Y_t=\frac2pX_{jt}\sum_{r=1}^{t-1}X_{jr}R^{(j)}_{q+n,t+n}.
\end{equation}
Conditioning $\mathfrak M^{(j)}$ and applying McDiarmid's inequality (see Lemma \ref{diarmid} in the Appendix) , we get
\begin{equation}\notag
\Pr\{|Y_t|\le    \frac{C\lna^{\frac2{\varkappa}+1}(\sum_{r=1}^p|R^{(j)}_{r+n,t+n}|^2)^{\frac12}}{p} \}\le\exp\{-c\lna^2\}                                                                              
\end{equation}
Truncating $Y_t$ at the level $\frac{C\lna^{\frac2{\varkappa}}(\sum_{r=1}^p|R^{(j)}_{r+n,t+n}|^2)^{\frac12}}{p}$ and applying McDiarmid's inequality again
(see Lemma \ref{diarmid} in the Appendix), we get
\begin{equation}\label{eps3}
 \Pr\{|\varepsilon_{j2}|>\frac{C\lna^{\frac2{\varkappa}+2}(\im  m_n^{(j)}(z)+\frac1{nv})^{\frac12}}{\sqrt{nv}}\}\le\exp\{-c\lna^2\}.
\end{equation}
Similar we get the bound for $\varepsilon_{l+n,2}$, for $l=1,\ldots,p$,
\begin{equation}\label{eps4}
 \Pr\{|\varepsilon_{l+n,2}|>\frac{C\lna^{2{\varkappa}+2}(\im m_n^{(j)}(z)+\frac 1{nv})^{\frac12}}{\sqrt{nv}}\}\le\exp\{-c\lna^2\}.
\end{equation}
\end{proof}
To finish the proof we consider $\varepsilon_{j3}$.

    \begin{lem}\label{lem2.3}
Under the conditions of  Theorem \ref{main} we have, for any $z=u+iv$ with $u\in\mathbb R$, 
$v>0$, and for any $j=1,\ldots,n$,
\begin{equation}\notag
|\varepsilon_{j3}|\le \frac y{nv}.
\end{equation}
\end{lem}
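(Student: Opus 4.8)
The plan is to exploit the interlacing structure that relates $\mathbf R^{(j)}$ and $\mathbf R$ together with the resolvent identity. Recall that $\varepsilon_{j3}=\frac1p\bigl(\sum_{l=1}^p R^{(j)}_{l+n,l+n}-\sum_{l=1}^p R_{l+n,l+n}\bigr)$, i.e. it compares the sum of the bottom-block diagonal entries of the resolvent of $\mathbf V$ with that of $\mathbf V^{(j)}$, the matrix obtained by removing one row and column from $\mathbf V$. First I would rewrite each of the two sums as a trace over the relevant block: $\sum_{l=1}^p R_{l+n,l+n}=\Tr\bigl(\mathbf P_p\mathbf R\bigr)$, where $\mathbf P_p$ is the orthogonal projection onto the last $p$ coordinates of $\R^{n+p}$, and similarly for $\mathbf R^{(j)}$ with the corresponding projection in $\R^{n+p-1}$. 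The point is that the last $p$ coordinates are exactly the ones that are \emph{not} deleted when passing from $\mathbf V$ to $\mathbf V^{(j)}$ (only the $j$-th of the first $n$ coordinates is removed), so both traces are taken over the same $p$-dimensional subspace.

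The main tool is the standard estimate for the difference of resolvents under a rank-one (more precisely, rank-two, since deleting a symmetric row/column) perturbation, or equivalently the eigenvalue interlacing between $\mathbf V$ and $\mathbf V^{(j)}$. Concretely, I would use that for a Hermitian matrix $\mathbf A$ and any principal submatrix $\mathbf A'$ obtained by deleting one index, and for the spectral measures $\mu_{\mathbf A}$, $\mu_{\mathbf A'}$ restricted appropriately, one has a bound of the form $\bigl|\Tr f(\mathbf A)-\Tr f(\mathbf A')\bigr|\le \sup|f|$ when $f$ is monotone along the real line, via interlacing; here with $f(x)=\frac1{x-z}$ and $z=u+iv$ we get that each such difference of traces is at most $\frac1v$ in absolute value. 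Since the difference $\varepsilon_{j3}$ only sees the last $p$ coordinates, one picks up the factor $\frac1p$, and since $y=\frac np$, the bound $\frac1{pv}$ becomes $\frac{y}{nv}$.

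The cleanest way to carry this out is via the formula expressing $R^{(j)}_{ll}$ in terms of entries of $\mathbf R$: one has the identity
\begin{equation}\notag
R^{(j)}_{l+n,l+n}=R_{l+n,l+n}-\frac{R_{l+n,j}R_{j,l+n}}{R_{jj}},
\end{equation}
valid for $l=1,\dots,p$. Summing over $l$ gives
\begin{equation}\notag
\varepsilon_{j3}=-\frac1p\cdot\frac1{R_{jj}}\sum_{l=1}^p R_{l+n,j}R_{j,l+n}=-\frac1p\cdot\frac{(\mathbf R\mathbf P_p\mathbf R)_{jj}}{R_{jj}}.
\end{equation}
Then I would bound $\bigl|\sum_{l=1}^p R_{l+n,j}R_{j,l+n}\bigr|=\bigl|(\mathbf R\,\mathbf P_p\,\mathbf R^*)_{jj}\bigr|$ — note $\overline{R_{l+n,j}}=R_{j,l+n}$ is \emph{not} needed; rather one uses $R_{j,l+n}=R_{l+n,j}$ by symmetry of $\mathbf V$, and then Cauchy--Schwarz in the form $\sum_l |R_{l+n,j}|^2=(\mathbf R\mathbf R^*)_{jj}\cdot(\text{block part})\le (\mathbf R\mathbf R^*)_{jj}$. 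The identity $\mathbf R\mathbf R^*=\frac1v\,\Im\mathbf R$ gives $(\mathbf R\mathbf R^*)_{jj}=\frac1v\Im R_{jj}$, hence $\sum_l|R_{l+n,j}|^2\le \frac1v\Im R_{jj}$, while $|R_{jj}|\ge \Im R_{jj}$. Combining,
\begin{equation}\notag
|\varepsilon_{j3}|\le \frac1p\cdot\frac{\frac1v\Im R_{jj}}{|R_{jj}|}\le \frac1{pv}=\frac{y}{nv}.
\end{equation}
The only place to be slightly careful is the bookkeeping of which coordinates survive when forming $\mathbf V^{(j)}$ and ensuring the projection $\mathbf P_p$ really acts only on the bottom block so that the $\frac1p$ is genuine; the analytic content is entirely the two elementary inequalities $(\mathbf R\mathbf R^*)_{jj}=\frac1v\Im R_{jj}$ and $|R_{jj}|\ge\Im R_{jj}$, so I do not expect a serious obstacle here.
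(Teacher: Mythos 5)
Your argument is correct, but it takes a genuinely different route from the paper's. The paper proceeds by trace bookkeeping: it converts both block-diagonal sums into expressions involving $\Tr\mathbf R$, $\Tr\mathbf R^{(j)}$, and $1/z$ via the identities $n m_n(z)=\tfrac12\Tr\mathbf R+\tfrac{p-n}{2z}$ and $(n-1)m_n^{(j)}(z)=\tfrac12\Tr\mathbf R^{(j)}+\tfrac{p-n+1}{2z}$, arriving at $\sum_l R_{l+n,l+n}-\sum_l R^{(j)}_{l+n,l+n}=\tfrac12(\Tr\mathbf R-\Tr\mathbf R^{(j)})\mp\tfrac1{2z}$, and then invokes the interlacing bound $|\Tr\mathbf R-\Tr\mathbf R^{(j)}|\le v^{-1}$ from \cite{GT:03} together with $|1/z|\le v^{-1}$. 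You instead use the local Schur-complement identity $R^{(j)}_{kl}=R_{kl}-R_{kj}R_{jl}/R_{jj}$, sum it over the bottom block, and close with the Ward identity $\mathbf R\mathbf R^*=\tfrac1v\im\mathbf R$ plus $\im R_{jj}\le|R_{jj}|$. Both give exactly $\frac1{pv}=\frac y{nv}$. Your route is more direct and self-contained (it does not need the $m_n$–$\Tr\mathbf R$ bookkeeping or an external interlacing lemma), while the paper's keeps the $\Tr$-level identities that it reuses elsewhere. One small technical note: the cleanest way to make your Cauchy--Schwarz step airtight without invoking $\mathbf V$ being real is to bound $\bigl|\sum_l R_{l+n,j}R_{j,l+n}\bigr|\le(\sum_l|R_{l+n,j}|^2)^{1/2}(\sum_l|R_{j,l+n}|^2)^{1/2}\le(\mathbf R^*\mathbf R)_{jj}^{1/2}(\mathbf R\mathbf R^*)_{jj}^{1/2}$ and note $\mathbf R^*\mathbf R=\mathbf R\mathbf R^*$ because $\mathbf V-z$ and $\mathbf V-\bar z$ commute; the conclusion is the same.
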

\begin{proof}It is straightforward to check that
\begin{equation}
 \sum_{l=1}^pR_{l+n,l+n}=n m_n(z)-\frac{p-n}{z}=\sum_{l=1}^{p+n}R_{ll}-n m_n(z)
\end{equation}
and
\begin{equation}
 \sum_{l=1}^pR^{(j)}_{l+n,l+n}=\sum_{l=1, l\ne j}^{p+n}R^{(j)}_{ll}-n m_n^{(j)}(z).
\end{equation}
Furthermore,
\begin{align}\label{m2}
 n m_n(z)=\frac12\Tr\mathbf R+\frac{p-n}{2z},\quad
n {m_n^{(j)}}(z)=\frac12\Tr\mathbf R^{(j)}+\frac{p-n+1}{2z}.
\end{align}
This implies
\begin{equation}\label{e3}
  \sum_{l=1}^pR_{l+n,l+n}- \sum_{l=1}^pR^{(j)}_{l+n,l+n}=\frac12\Tr\mathbf R
-\frac12\Tr\mathbf R^{(j)}-\frac1{2z}.
\end{equation}

The conclusion of Lemma \ref{lem2.3} follows immediately from the inequality 
\newline$|\Tr\mathbf R-\Tr\mathbf R^{(j)}|\le v^{-1}$ and $|\frac1z|\le v^{-1}$
(see Lemma 4.1 in \cite{GT:03}).
\end{proof}
It is straightforward to check now  that, for $j=1,\ldots,n+p$,
\begin{equation}\label{eps5}
 |\varepsilon_{j3}|\le \max\{ \frac{\im^{\frac12}m_n^{(j)}(z)}{\sqrt{nv}},\frac1{nv}\} \text{ a.s.}
\end{equation}

Combining inequalities \eqref{eps1} -- \eqref{eps5} we get the claim.
Thus Proposition \ref{epsilon} is proved.
    \end{Proof of}
\begin{rem}
Equalities \eqref{m2} imply that
\begin{equation}\label{mnj}
 |m_n(z)-m_n^{(j)}(z)|\le (nv)^{-1}.
\end{equation}
\end{rem}

\begin{prop}\label{momeps}
 Under conditions of theorem \ref{main} there exist  constants  $C$ and $A$ depending on $\varkappa$ and $y$ only such that, for any $q\le A\log n$,
 \begin{equation}
  \E\{|\varepsilon_j|^q\Big|\mathfrak M^{(j)}\}
  \le \frac{C^qq^q\lna^{\frac {2q}{\varkappa}}(\im m_n^{(j)}+\frac1{nv})^{\frac q2}}{(nv)^{\frac q2}}.
 \end{equation}

\end{prop}
\begin{proof}
 First we note that
 \begin{equation}
  \E\{|\varepsilon_j|^q\Big|\mathfrak M^{(j)}\}\le 3^q(\E\{|\varepsilon_{j1}|^q\Big|\mathfrak M^{(j)}\}+\E\{|\varepsilon_{j2}|^q\Big|\mathfrak M^{(j)}\}+\E\{|\varepsilon_{j3}|^q\Big|\mathfrak M^{(j)}\})
 \end{equation}
We divide now the proof into three Lemmas.
\begin{lem}\label{eps1} Under conditions of theorem \ref{main} there exist  constants  $C$ and $A$ depending on $\varkappa$ and $y$ only such that, for any $q\le A\log n$, and for all $j=1,\ldots,n$,
\begin{equation}
 \E\{|\varepsilon_{j1}|^q\Big|\mathfrak M^{(j)}\}\le\frac{C^qq^q\lna^{\frac{2q}{\varkappa}}(\im (m_n^{(j)}+\frac{y-1}z))^{\frac q2}}{(nv)^{\frac q2}}.
 \end{equation}
\end{lem}
\begin{Proof of} {\it Lemma}Recall that
\begin{equation}
 \varepsilon_{j1}=\frac1p\sum_{l=1}^p(X_{jl}^2-1)R^{(j)}_{l+n,l+n}.
\end{equation}
Conditioning on $\mathfrak M^{(j)}$ and applying Rosethal's inequality, we obtain
\begin{align}\label{1+}
 \E\{|\varepsilon_{j1}|^q\Big|\mathfrak M^{(j)}\}\le C^qq^qp^{-q}((\sum_{l=1}^p|R^{(j)}_{l+n,l+n}|^2)^{\frac q2}+\sum_{l=1}^p|R^{(j)}_{l+n,l+n}|^q\E|(X_{jl}^2-1|^q).
\end{align}
According to Remark \ref{trunc00},
\begin{equation}\label{2+}
 \E|(X_{jl}^2-1|^q\le \lna^{\frac{2q}{\varkappa}}.
\end{equation}
Note that
\begin{equation}\label{3+}
 \sum_{l=1}^p|R^{(j)}_{l+n,l+n}|^2\le \Tr|\mathbf R^{(j)}|^2\le \frac{2p}v\im(ym_n^{(j)}(z)+\frac{y-1}z),
\end{equation}
and
\begin{equation}\label{4+}
 \sum_{l=1}^p|R^{(j)}_{l+n,l+n}|^q\le (\sum_{l=1}^p|R^{(j)}_{l+n,l+n}|^2)^{\frac q2}.
\end{equation}
Combining relations \eqref{1+} -- \eqref{4+}, we get 

\begin{equation}
 \E\{|\varepsilon_{j1}|^q\Big|\mathfrak M^{(j)}\}\le\frac{C^qq^q\lna^{\frac{2q}{\varkappa}}(\im (m_n^{(j)}+\frac{y-1}z))^{\frac q2}}{(nv)^{\frac q2}}.
 \end{equation}
 Thus lemma \ref{eps1} is proved.
\end{Proof of}
\begin{lem}\label{eps2}
 Under conditions of theorem \ref{main} there exist  constants  $C$ and $A$ depending on $\varkappa$ and $y$ only such that, for any $q\le A\log n$, and for all $j=1,\ldots,n$,
\begin{equation}
 \E\{|\varepsilon_{j2}|^q\Big|\mathfrak M^{(j)}\}\le\frac{C^qq^q\lna^{\frac {2q}{\varkappa}}(\im (m_n^{(j)}+\frac{y-1}z))^{\frac q2}}{(nv)^{\frac q2}}.
 \end{equation}
\end{lem}
\begin{Proof of} {\it Lemma.}
Recall that
 \begin{equation}
 \varepsilon_{j2}=\frac1p\sum_{1\le l\ne k\le p}X_{jl}X_{jk}R^{(j)}_{l+n,k+n}.
\end{equation}
Conditioning on $\mathfrak M^{(j)}$ and applying Burkholder's inequality (see \cite{Rosenthal:1970} and \cite{Johnson:1985}), we obtain
\begin{align}\label{874}
 \E\{|\varepsilon_{j2}|^q\Big|\mathfrak M^{(j)}\}&\le C^qq^qp^{-q}\Big(\E\{(\sum_{l=2}^p|\sum_{k=1}^{l-1}X_{jk}R^{(j)}_{l+n,k+n}|^2)^{\frac q2}\Big|\mathfrak M^{(j)}\}\notag\\&\qquad\qquad+
 \sum_{k=2}^l\E\{|\sum_{k=1}^{l-1}X_{jk}R^{(j)}_{l+n,k+n}|^q\Big|\mathfrak M^{(j)}\}\E|X_{jl}|^q\Big).
\end{align}
To estimate the second sum in the r.h.s of \eqref{874} we apply Rosenthal's inequality as well. We get
\begin{equation}\label{eta22}
\sum_l\E|\sum_{k}X_{jk}b^{(j)}_{lk}|^{q}\E|X_{11}|^{q}\le C^{q}q^{q} \lna^{\frac q{\varkappa}}
\Big(\sum_{l}\big(\sum_{k}|b^{(j)}_{lk}|^2\big)^{\frac q2}+
\lna^{\frac q{\varkappa}}\sum_{l}\sum_{k}|b^{(j)}_{lk}|^{q}\Big),
\end{equation}
where $b^{(j)}_{lk}:=R^{(j)}_{l+n,k+n}$
Consider the random variables
$Y_k=\frac1{c\lna^{\frac1{\varkappa}}}X_{jk}$.
Note that $Y_1,\ldots, Y_p$ are independent and, by Remark \ref{trunc00},
$|Y_{k}|\le 1,\quad \E Y_k=0$.

Consider the quadratic form in $Y_1,\ldots,Y_p$
\begin{equation}\notag
 f(Y_1,\ldots,Y_n)=\sum_{l=1}^p(\sum_{k=1}^lb^{(j)}_{lk}Y_k)^2
\end{equation} 
Note that $f$ is a convex function.
Let $Z_1,\ldots, Z_p$ denote standard Gaussian r.v.'s.
Then it follows from results of Bobkov \cite{bobkov:96}, \cite{bobkov:00}  (Choquet comparison of measures), that
\begin{equation}\notag
 \E^{\frac1m}|f(Y_1,\ldots,Y_p)|^m\le \E^{\frac1m}|f(c_0Z_1,\ldots,c_0Z_p)|^m,
\end{equation}
were $c_0=\frac{\sqrt{2\pi}}2$.  Note that 
\begin{equation}\notag
 f(c_0Z_1,\ldots,c_0Z_p)=c_0^2f(Z_1,\ldots,Z_p).
\end{equation}
For the Gaussian r.v.'s we have (\cite{bobkovgoetze:99}, Theorem 3.1)
\begin{equation}\notag
 \E^{\frac1m}|f(Z_1,\ldots,Z_p)|^m\le Cm\E|f(Z_1,\ldots,Z_p)|=Cm\sum_{l=1}^p\sum_{k=1}^p|b^{(j)}_{lk}|^2.
\end{equation}
In our case 
\begin{equation}
\frac1p\sum_{l=1}^p\sum_{r=1}^p|b^{(j)}_{lk}|^2\le v^{-1}\im (m_n^{(j)}(z)+\frac{y-1}z).
\end{equation}
Applying these inequalities, we get, using that $X_{jq}=c\lna^{\frac1{\varkappa}}Y_{q}$, 
\begin{align}\label{bobkov}
 \frac1{n^{\frac q2}}\Big(\E\{\Big(\frac1n\sum_{l=1}^p(
 |\sum_{k}X_{jr}b^{(j)}_{lk}|^{2}\Big)^{\frac q2}\big|\mathfrak M^{(j)}\}\Big)\le
 Cq^qv^{-\frac q2}\lna^{\frac2{\varkappa}}(\im (m_n^{(j)}(z)+\frac{y-1}z))^q.
\end{align}
Combining inequalities \eqref{eta21}, \eqref{eta22} and \eqref{bobkov}, we get
\begin{equation}
\E\{|\varepsilon_{j2}|^{q}\Big|\mathfrak M^{(j)}\}\le \frac{C^{q}q^{q}\lna^{\frac{2q}{\varkappa}}}{(nv)^{\frac q2}}(\im (m_n^{(j)}(z)+\frac{y-1}z))^{\frac q2}.
\end{equation}
Thus Lemma \ref{eps2} is proved.
\end{Proof of}
\begin{lem}\label{eps3}
Under conditions of theorem \ref{main} there exist  constants  $C$ and $A$ depending on $\varkappa$ and $y$ only such that, for any $q\le A\log n$, and for all $j=1,\ldots,n$,
\begin{align}
 \E\{|\varepsilon_{j3}|^q\Big|\mathfrak M^{(j)}\}&\le\frac{C^q}{n^q|z|^q}+\frac{C^q}{n^q}\E\{|R_{jj}|^q\Big|\mathfrak M^{(j)}\}\notag\\&
 +\frac{C^qq^{\frac {3}2}\lna^{\frac {2q}{\varkappa}}}{(nv)^{\frac {3q}2}}(\im(m_n^{(j)}(z)+\frac{y-1}{z}))^{\frac q2}\E^{\frac12}\{|R_{jj}|^{2q}\Big|\mathfrak M^{(j)}\}.
 \end{align}
 
\end{lem}
\begin{rem}
 For $z=u+iv$ with $v\ge v_0$, we have
 \begin{align}
 \E\{|\varepsilon_{j3}|^q\Big|\mathfrak M^{(j)}\}&\le\frac{C^q}{n^q|z|^q}+\frac{C^q}{n^q}\E\{|R_{jj}|^q\Big|\mathfrak M^{(j)}\}\notag\\&
 +\frac{C^q\beta_n^q}{(nv)^q}(\im(m_n^{(j)}(z)+\frac{y-1}{z}))^{\frac q2}\E^{\frac12}\{|R_{jj}|^{2q}\Big|\mathfrak M^{(j)}\}.
 \end{align}
\end{rem}

\begin{Proof of} {\it Lemma.}
 Note that
 \begin{equation}
  \varepsilon_{j3}=\frac1{2p}(\Tr\mathbf R-\Tr\mathbf R^{(j)})+\frac1{2pz}.
 \end{equation}
 From Shur's decomposition formula it follows that
 \begin{equation}
  \Tr\mathbf R-\Tr\mathbf R^{(j)}=(1+\eta_j)R_{jj},
 \end{equation}
 where
 \begin{equation}
  \eta_j=\frac1p\sum_{1\le l,k\le p}X_{jl}X_{jk}[(\mathbf R^{(j)})^2]_{l+n,k+n}.
 \end{equation}
This implies that
\begin{equation}
 |\varepsilon_{j3}|^q\le \frac{(3y)^q}{n^q}(|R_{jj}|^q+|\eta_j|^q|R_{jj}|^q+\frac1{|z|^q}).
\end{equation}

Applying Cauchy - Schwartz inequality, we obtain
\begin{align}
 \E\{|\varepsilon_{j3}|^q\Big|\mathfrak M^{(j)}\}\le\frac{C^q}{n^q|z|^q}+\frac{C^q}{n^q}\E\{|R_{jj}|^q\Big|\mathfrak M^{(j)}\}+\frac{(3y)^q}{n^q}\E^{\frac12}\{|\eta_j|^{2q}\Big|\mathfrak M^{(j)}\}
 \E^{\frac12}\{|R_{jj}|^{2q}\Big|\mathfrak M^{(j)}\}.
\end{align}
Using Lemma \ref{eta} below, we get 
\begin{align}
  \E\{|\varepsilon_{j3}|^q\Big|\mathfrak M^{(j)}\}&\le\frac{C^q}{n^q|z|^q}+\frac{C^q}{n^q}\E\{|R_{jj}|^q\Big|\mathfrak M^{(j)}\}+\frac{C^qq^q(\im( m_n^{(j)}(z)+\frac{1-y}z))^{\frac q2}}{(nv)^{q}}\notag\\&
+\frac{C^qq^{\frac{3q}2}(\im( m_n^{(j)}(z)+\frac{1-y}z))^{\frac q2}}{(nv)^{\frac{3q}2}}\E^{\frac12}\{|R_{jj}|^{2q}\Big|\mathfrak M^{(j)}\}.
\end{align}

\end{Proof of}

Thus lemmas \ref{eps1}, \ref{eps2}, \ref{eps3} together yield Proposition \ref{momeps}. 

\end{proof}

Let
\begin{equation}
\eta_j=\frac1p\sum_{1\le l\ne k\le p}X_{jl}X_{jk}b_{lk}^{(j)},
\end{equation}
where
\begin{equation}
b_{lk}^{(j)}=[{\mathbf R^{(j)}}^2]_{l+n,k+n}.
\end{equation}
\begin{lem}\label{eta}Under  the conditions of  Theorem \ref{main} there exist absolute constants $C$ and $c$ such that, for $z\in\mathbb G$ and $j=1,\ldots,n$, 
\begin{equation}\notag
\E\{|\eta_j|^{2q}\Big|\mathfrak M^{(j)}\}\le\frac{C^q(\im m_n^{(j)}(z))^{2q}}{v^{2q}}+\frac{C^{2q}(1-y)^{2q}}{|z|^{2q}}
+\frac{C^qq^{3q}\lna^{\frac {2q}{\varkappa}}(\im( m_n^{(j)}(z)+\frac{y-1}z))^{q}}{n^qv^{3q}}.
\end{equation}
\end{lem}
\begin{proof}
We may represent $\eta_j$ in the form
\begin{equation}
\eta_j=\eta_{j0}+\eta_{j1}+\eta_{j2},
\end{equation}
where
\begin{equation}
\eta_{j0}=\frac1p\sum_{l=1}^pb_{ll}^{(j)}, \quad\eta_{j1}=\frac1p\sum_{l=1}^p(X_{jl}^2-1)b_{ll}^{(j)},\notag\\
\eta_{j2}=\frac1p\sum_{1\le l\ne k\le p}X_{jl}X_{jk}b_{lk}^{(j)}.
\end{equation}
First we note
\begin{equation}
|\eta_{j0}|\le \frac1p\sum_{l=1}^p|[{\mathbf R^{(j)}}^2]_{l+n,l+n}|\le \frac1p
\sum_{l=1, l\ne j}^{n+p}\sum_{k=1, k\ne j}^{n+p}|R^{(j)}_{kl}|^2\le Cv^{-1}\im m_n^{(j)}(z)+\frac{C(1-y)}{|z|^2}.
\end{equation}
Furthermore, conditioning on $\mathfrak M^{(j)}$ and applying H\"older's inequality, we get
\begin{align}\label{eta4}
\E\{|\eta_{j1}|^{2q}\Big|\mathfrak M^{(j)}\}\le \frac{C^{2q}q^{2q}}{n^{2q}}(\Big(\sum_{l=1}^p|b_{ll}^{(j)}|^2)^q+\sum_{l=1}^p|b_{ll}^{(j)}|^{2q}\E|X_{11}|^{2q}\Big).
\end{align}
Since $|b^{(j)}_{l+n,k+n}|\le v^{-2}$, we have
\begin{equation}\label{eta10}
\frac1p\sum_{l=1}^p|b_{ll}^{(j)}|^2\le v^{-2}\frac1p\sum_{l=1}^p|b_{ll}^{(j)}|\le v^{-3}\im( m_n^{(j)}(z)+\frac{y-1}z).
\end{equation}
We estimate now the second sum in the right hand side  of \eqref{eta4}.
By Remark \ref{trunc00}, we have
\begin{equation}
\E|X_{11}|^{2q}\le \lna^{\frac{2q}{\varkappa}}.
\end{equation}
Using that, we obtain
\begin{equation}\label{eta11}
\frac1{n^{q}}\sum_{l=1}^p|b_{ll}^{(j)}|^{2q}\E|X_{11}|^{2q}\le \lna^{2q}\Big(\frac1n\sum_{l=1}^p|b_{ll}^{(j)}|^{2}\Big)^q\le 
v^{-3q}\lna^{2q}(\im (m_n^{(j)}(z)+\frac{y-1}z))^q.
\end{equation}
Inequalities \eqref{eta4}, \eqref{eta10} and \eqref{eta11} together imply
\begin{align}\label{eta100}
\E\{|\eta_{j1}|^{2q}\Big|\mathfrak M^{(j)}\}\le \frac{C^{2q}q^{2q}\lna^{2q}}{n^qv^{3q}}\Big(\im (m_n^{(j)}(z)+\frac{y-1}z)\Big)^q.
\end{align}
To bound $\E\{|\eta_{j2}|^{2q}\Big|\mathfrak M^{(j)}\}$ we apply Burkholder's inequality for martingales (see \cite{Hitczenko:1990} and \cite{Burkholder:1973})
We obtain
\begin{align}\label{eta21}
\E\{|\eta_{j2}|^{2q}\Big|\mathfrak M^{(j)}\}\le \frac{C^{2q}q^{2q}}{n^{2q}}\Big(\E\{\Big(\sum_{l}\Big(\sum_{k}X_{jk}b^{(j)}_{lk}\Big)^2\Big)^q+
\sum_l\E|\sum_{k}X_{jk}b^{(j)}_{lk}|^{2q}\E|X_{11}|^{2q}.
\end{align}
To estimate the second sum in the r.h.s of \eqref{eta21} we apply Rosenthal's inequality (see \cite{Rosenthal:1970} and \cite{Johnson:1985}). We get
\begin{equation}\label{eta22}
\sum_l\E|\sum_{k}X_{jk}b^{(j)}_{lk}|^{2q}\E|X_{11}|^{2q}\le C^{2q}q^{2q} \lna^{\frac{2q}{\varkappa}}
\Big(\sum_{l}\big(\sum_{k}|b^{(j)}_{lk}|^2\big)^q+
\lna^{\frac{2q}{\varkappa}}\sum_{l}\sum_{k}|b^{(j)}_{lk}|^{2q}\Big)
\end{equation}
Consider the random variables
$Y_k=\frac1{c\lna^{\frac1{\varkappa}}}X_{jk}$.
Note that $Y_1,\ldots, Y_p$ are independent and, by Remark \ref{trunc00},
$|Y_{k}|\le 1,\quad \E Y_k=0$.

Consider the quadratic form in $Y_1,\ldots,Y_p$
\begin{equation}\notag
 f(Y_1,\ldots,Y_n)=\sum_{l=1}^p(\sum_{k=1}^lb^{(j)}_{lk}Y_k)^2
\end{equation} 
Note that $f$ is a convex function.
Let $Z_1,\ldots, Z_p$ denote standard Gaussian r.v.'s.
Then it follows from results of Bobkov \cite{bobkov:96}, \cite{bobkov:00}  (Choquet comparison of measures), that
\begin{equation}\notag
 \E^{\frac1m}|f(Y_1,\ldots,Y_p)|^m\le \E^{\frac1m}|f(c_0Z_1,\ldots,c_0Z_p)|^m,
\end{equation}
were $c_0=\frac{\sqrt{2\pi}}2$.  Note that 
\begin{equation}\notag
 f(c_0Z_1,\ldots,c_0Z_p)=c_0^2f(Z_1,\ldots,Z_p).
\end{equation}
For the Gaussian r.v.'s we have (\cite{bobkovgoetze:99}, Theorem 3.1)
\begin{equation}\notag
 \E^{\frac1m}|f(Z_1,\ldots,Z_p)|^m\le Cm\E|f(Z_1,\ldots,Z_p)|=Cm\sum_{l=1}^p\sum_{k=1}^p|b^{(j)}_{lk}|^2.
\end{equation}
In our case 
\begin{equation}
\frac1p\sum_{l=1}^p\sum_{r=1}^p|b^{(j)}_{lk}|^2\le v^{-3}\im (m_n^{(j)}(z)+\frac{y-1}z).
\end{equation}
Applying these inequalities, we get, using that $X_{jq}=c\lna^{\frac1{\varkappa}}Y_{q}$, 
\begin{align}\label{bobkov}
 \frac1{n^q}\Big(\E\{\Big(\sum_{l=1}^p(
 |\sum_{k}X_{jr}b^{(j)}_{lk}|^{2}\Big)^q\big|\mathfrak M^{(j)}\}\Big)\le
 Cq^qv^{-3q}\lna^{\frac2{\varkappa}}(\im( m_n^{(j)}(z)+\frac{y-1}z))^q.
\end{align}
Combining inequalities \eqref{eta21}, \eqref{eta22} and \eqref{bobkov}, we get
\begin{equation}
\E\{|\eta_{j2}|^{2q}\Big|\mathfrak M^{(j)}\}\le \frac{C^{2q}q^{3q}\lna^{\frac{4q}{\varkappa}}}{n^qv^{3q}}(\im (m_n^{(j)}(z)+\frac{y-1}z))^q.
\end{equation}
\end{proof}
\begin{lem}\label{nana}Under the conditions of  Theorem \ref{main} there exists a constant $C>0$ depending on $\varkappa$ and $y$ only that, for all $z=u+iv\in\mathbb G$ , and for any $j=1,\ldots,n$,
\begin{equation}
\frac1{|b_n(z)|}\le \frac C{|b_n^{(j)}(z)|}.
\end{equation}
\end{lem}
\begin{proof}
First we note
\begin{equation}\label{trup}
b_n(z)-b_n^{(j)}(z)=y(m_n(z)-m_n^{(j)}(z))=\frac y{2n}(\Tr\mathbf R-\Tr\mathbf R^{(j)})+\frac1{2nz}.
\end{equation}
This implies that
\begin{equation}
|b_n(z)-b_n^{(j)}(z)|\le \frac C{nv}\text{ a. s.}
\end{equation}
Furthermore, for $z\in\mathbb G$,
\begin{equation}
|b_n(z)|\ge \im(z+\frac{y-1}z+y s_y(z))\ge c|(z+\frac{y-1}z)^2-4y|^{\frac12}.
\end{equation}
It is straightforward to check that, for $z\in\mathbb G$,
\begin{equation}
|(z+\frac{y-1}z)^2-4y|^{\frac12}\ge \frac{\sqrt{\gamma(z)}}{|z|}.
\end{equation}
This yields , for $z\in\mathbb G$,
\begin{equation}
|b_n(z)|^{-1}\le \frac C{\sqrt{\gamma(z)}},
\end{equation}
and
\begin{align}
|b_n^{-1}(z)-{b_n^{(j)}}^{-1}(z)|\le |b_n^{-1}(z){b_n^{(j)}}^{-1}(z)||b_n^{-1}(z)-{b_n^{(j)}}(z)|\le 
\frac C{nv\sqrt{\gamma(z)}}|{b_n^{(j)}}^{-1}(z)|\le C|{b_n^{(j)}}^{-1}(z)|.
\end{align}
The last inequality concludes the proof of Lemma \ref{nana}

\end{proof}
\begin{lem}\label{bn}Under the conditions of  Theorem \ref{main} there exists a constant $C>0$ depending on $\varkappa$ and $y$ only that, for all $z=u+iv\in\mathbb G$ , and for any $j=1,\ldots,n$,
\begin{equation}
\E\Big\{\frac{|b_n(z)-b_n^{(j)}(z)|^{2q}}{|b_n(z)|^{2q}}\Big\}\le\frac{C^qq^{2q}\lna^{2q}}{(nv)^{2q}}.
\end{equation}
\end{lem}
\begin{proof}First we apply Lemma \ref{nana}. We obtain
\begin{equation}
\E\Big\{\frac{|b_n(z)-b_n^{(j)}(z)|^{2q}}{|b_n(z)|^{2q}}\Big\}\le\E\Big\{\frac{|b_n(z)-b_n^{(j)}(z)|^{2q}}{|b_n^{(j)}(z)|^{2q}}\Big\}
\end{equation}
Using equality \eqref{trup}, we get
\begin{equation}
\E\Big\{\frac{|b_n(z)-b_n^{(j)}(z)|^{2q}}{|b_n(z)|^{2q}}\Big\}
\le C^q\E\Big\{\frac{|\frac1n(\Tr\mathbf R(z)-\Tr\mathbf R^{(j)}(z))|^{2q}}{|b_n^{(j)}(z)|^{2q}}\Big\}
+\frac {C^q(1-y)^{2q}}{n^{2q}|z|^{2q}|b_n^{(j)}(z)|^{2q}}.
\end{equation}
Note that
\begin{equation}
|z||b_n^{(j)}(z)|\ge (1-y)|z|\frac v{|z|^2}.
\end{equation}
This implies that
\begin{equation}
\frac {C^q(1-y)^{2q}}{n^{2q}|z|^{2q}|b_n^{(j)}(z)|^{2q}}\le \frac {C^q}{n^{2q}v^{2q}}
\end{equation}
We use now equality
\begin{equation}
\Tr\mathbf R-\Tr\mathbf R^{(j)}=(1+\eta_j)R_{jj}.
\end{equation}
We get
\begin{align}
\E\Big\{\frac{|\frac1n(\Tr\mathbf R(z)-\Tr\mathbf R^{(j)}(z))|^{2q}}{|b_n^{(j)}(z)|^{2q}}\Big\}&\le 
\frac{C^q}{n^{2q}v^{2q}}\E|R_{jj}|^{2q}+
\E\Big\{\frac{|\frac1n\eta_jR_{jj}|^{2q}}{|b_n^{(j)}(z)|^{2q}}\Big\}.
\end{align}
Applying Cauchy - Schwartz inequality, we arrive
\begin{align}
\E\Big\{\frac{|\frac1n(\Tr\mathbf R(z)-\Tr\mathbf R^{(j)}(z))|^{2q}}{|b_n^{(j)}(z)|^{2q}}\Big\}&\le 
\frac{C^q}{n^{2q}v^{2q}}\E|R_{jj}|^{2q}
+\frac1{n^{2q}}\E^{\frac12}\Big\{\frac{|\eta_j|^{4q}}{|b_n^{(j)}(z)|^{4q}}\Big\}
\E^{\frac12}|R_{jj}|^{4q}.
\end{align}
Conditioning on $\mathfrak M^{(j)}$ and applying Lemma \ref{eta} , we get
\begin{align}
\E\Big\{\frac{|\frac1n(\Tr\mathbf R(z)-\Tr\mathbf R^{(j)}(z))|^{2q}}{|b_n^{(j)}(z)|^{2q}}\Big\}&\le 
\frac{C^q}{n^{2q}v^{2q}}\E|R_{jj}|^{2q}+\frac1{n^{2q}}\E^{\frac12}\Big\{\frac{C^q(\im m_n^{(j)}(z))^{4q}}
{v^{4q}|b_n^{(j)}(z)|^{4q}}\Big\}|R_{jj}|^{2q}\notag\\&
+\frac1{n^{2q}}\frac{C^q}\E^{\frac12}\Big\{\frac{C^{4q}(1-y)^{4q}}{|z|^{4q}|b_n^{(j)}(z)|^{4q}}\Big\}
\E^{\frac12}|R_{jj}|^{4q}\notag\\&+
\frac1{n^{2q}}\E^{\frac12}\Big\{\frac{|\im m_n^{(j)}(z)|^{2q}}{n^{2q}v^{6q}|b_n^{(j)}(z)|^{4q}}\Big\}
\E^{\frac12}|R_{jj}|^{4q}.
\end{align}
Using that $|b_n^{(j)}|\ge\im m_n^{(j)}$, $|zb_n^{(j)}|\ge \frac{(1-y)v}{|z|}$,  and $|b_n^{(j_)}|\ge c|(z+\frac{y-1}z)^2-4y|^{\frac12}$, for $z\in\mathbb G$, we get
\begin{align}
\E\Big\{\frac{|\frac1n(\Tr\mathbf R(z)-\Tr\mathbf R^{(j)}(z))|^{2q}}{|b_n^{(j)}(z)|^{2q}}\Big\}&\le 
\frac{C^q}{n^{2q}v^{2q}}\E|R_{jj}|^{2q}+\frac {C^q}{n^{2q}v^{2q}}\E|R_{jj}|^{2q}\notag\\&
+\frac {C^q}{n^{3q}v^{3q}}|(z+\frac{y-1}z)^2-4y|^{\frac q2}\E^{\frac12}|R_{jj}|^{4q}.
\end{align}
Applying now that, for $z\in\mathbb G$,
\begin{equation}
nv|(z+\frac{y-1}z)^2-4y|^{\frac12}\ge c,
\end{equation}
and according to Proposition \ref{rjj8},
\begin{equation}
\E|R_{jj}|^{4q}\le C^q,
\end{equation}
we obtain
\begin{align}
\E\Big\{\frac{|\frac1n(\Tr\mathbf R(z)-\Tr\mathbf R^{(j)}(z))|^{2q}}{|b_n^{(j)}(z)|^{2q}}\Big\}&\le 
\frac{C^q}{n^{2q}v^{2q}}.
\end{align}
Thus lemma \ref{bn} is proved.
\end{proof}


\subsection{The proof of Lemma \ref{trunc}}\label{proofoftrunc}\begin{proof} First we consider $m_n(z)-\widehat m_n(z)$.
 Denote by
 \begin{equation}
  \widehat{\mathbf R}=(\widehat V-z\mathbf I)^{-1}.
 \end{equation}
We have
\begin{equation}
 m_n(z)-\widehat m_n(z)= \frac1n\Tr \mathbf R(\mathbf V-\widehat{\mathbf V})\widehat{\mathbf R}.
\end{equation}
This representation and inequality $\max\{\|\mathbf R\|,\|\widehat{\mathbf R}\|\}\le v^{-1}$ imply
\begin{equation}
 |m_n(z)-\widehat m_n(z)|\le \frac 1{\sqrt nv^2}\|\mathbf W-\widehat{\mathbf X}\|_2=v^{-2}\left(\frac1{np}\sum_{j,l=1}^n|X_{jl}-\widehat X_{jl}|^2\right)^{\frac12}.
\end{equation}
From here it follows that
\begin{equation}\label{trunc1}
 \Pr\{|m_n(z)-\widehat m_n(z)|>\frac{C}{n^2v^2}\}\le \sum_{j,l=1}^n\Pr\{|X_{jl}-\widehat X_{jl}|>\frac{C}{n^2}\}.
\end{equation}
Note that
\begin{equation}
 X_{jl}-\widehat X_{jl}=X_{jl}\mathbb I\{|X_{jl}|\ge C\lna^{\frac1{\varkappa}}\}-\E X_{jl}\mathbb I\{|X_{jl}|\ge C\lna^{\frac1{\varkappa}}\}.
\end{equation}
Condition \eqref{exptails} implies that
\begin{equation}
 |\E X_{jl}\mathbb I\{|X_{jl}|\ge C\lna^{\frac1{\varkappa}}\}|\le \exp\{-c\lna\}\le \frac {C}{2n^2}.
\end{equation}
From here it follows that
\begin{equation}\label{trunc3}
 \Pr\{|X_{jl}-\widehat X_{jl}|>\frac{C}{n^2}\}\le \Pr\{|X_{jl}|\ge C\lna^{\frac1{\varkappa}}\}\le A\exp\{-c\lna\}.
\end{equation}
Inequalities \eqref{trunc1} and \eqref{trunc3} together imply that there exists a constant $c'$ such that
\begin{equation}\label{trunc01}
 \Pr\{|m_n(z)-\widehat m_n(z)|>\frac{C}{n^2v^2}\}\le \exp\{-c'\lna\}.
\end{equation}

We prove now that
\begin{equation}\label{trunc4}
 \Pr\{|\widetilde m_n(z)-\widehat m_n(z)|\ge \frac C{n^2v^2}\}\le \exp\{-c\lna\}.
\end{equation}
Repeating the arguments of \eqref{trunc} -- \eqref{trunc3}, we need to prove
\begin{equation}\label{trunc5}
 \Pr\{|\widehat X_{jk}-\widetilde X_{jk}|>\frac C{n^3}\}\le \Pr\{(1-\sigma_{jk})\sigma_{jk}^{-1}
 |\widehat X_{jk}|>\frac C{n^2}\}.
\end{equation}
Note that
\begin{equation}
 \sigma_{jk}^2=1-\E X_{jk}^2\mathbb I\{|X_{jk}|\ge c\lna^{\frac1{\varkappa}}\}-
 (\E X_{jk}\mathbb I\{|X_{jk}|\ge c\lna^{\frac1{\varkappa}}\})^2\ge 1-\exp\{-c'\lna\}.
\end{equation}
The last bound is uniform in $j,k=1,\ldots,n$.
This implies that
\begin{equation}\label{trunc6}
 (1-\sigma_{jk})\sigma_{jk}^{-1}\le \exp\{-c''\lna\}.
\end{equation}
Inequalities \eqref{trunc5} and \eqref{trunc6} together imply \eqref{trunc4}.
Thus Lemma \ref{trunc} is proved.
\end{proof}
\subsection{The proof of Lemma \ref{key}}\label{profofkey} Note that by Lemma \ref{stmp}
\begin{equation}
|S_y(z)|\le y^{-\frac12}\text{ and }|z+\frac{y-1}z+yS_y(z)|\ge \sqrt y.
\end{equation}
This implies that for any $z=u+iv$ with $v\ge v_k$ and for any $\omega\in\mathcal A_k\cap\mathcal B_k$
\begin{equation}\label{ap1}
|m_n(z)|\le \frac3{2\sqrt y}\text{ and }|a_n(z)|\ge\frac{\sqrt y}2.
\end{equation}
The relations \eqref{repr01} and \eqref{ap1} together imply, for $j=1,\ldots,n$,
\begin{equation}
|R_{jj}|\le 2y^{-\frac12}(1-2\gamma_0)^{-1}.
\end{equation}
The last inequality yields 
\begin{equation}\label{ap2}
|T_n(z)|\le 2(1-2\gamma_0)^{-1}\gamma_0.
\end{equation}

Assume that $|b_n(z)|\ge \sqrt{{|\delta_n(z)|}}$. Then using \eqref{lambda'} we get
\begin{equation}
|\Lambda_n(z)|\le \sqrt{|T_n(z)|}.
\end{equation}
If $|b_n(z)|\le \sqrt{{|T_n(z)|}}$, we assume first that
\begin{equation}\label{asum1}
|\Lambda_n(z)|> 2\sqrt{|T_n(z)|}.
\end{equation}
Applying triangle inequality, we may write
\begin{equation}
|z+\frac{y-1}z+2ys_y(z)|\ge |\Lambda_n(z)|-|b_n(z)|\ge \sqrt{|T_n(z)|}.
\end{equation}
From the other hand, according to Lemmas \ref{sign} and \ref{imsqrt},
\begin{align}
|b_n(z)|&\ge \im\{z+\frac{y-1}z+ys_y(z)\}\ge \notag\\&\ge\frac 12
|(z+\frac{y-1}z)^2-4y|^{\frac12}=\frac 12|z+\frac{y-1}z+2ys_y(z)|\ge 
\frac 12\sqrt{|T_n(z)|}.
\end{align}
The last inequality and relation \eqref{lambda'} together imply
\begin{equation}\label{del1}
|\Lambda_n(z)|\le 2\sqrt{|T_n(z)|}.
\end{equation}

The inequalities \eqref{del1}contradicts to the asumption  \eqref{asum1}. This implies that  for $\omega\in \mathcal A_k\cap\mathcal B_k$
\begin{equation}
|\Lambda_n(z)|\le 2\sqrt{|T_n(z)|}\le \frac{2\sqrt {2\gamma_0}}{(1-2\gamma_0)}.
\end{equation}
Since $y\in(0,1)$ we may write
\begin{equation}
|\Lambda_n(z)|\le \frac{2\sqrt {2\gamma_0}}{\sqrt{1-2\gamma_0}\sqrt y}.
\end{equation}
For $\gamma_0\le \frac1{130}$ we have for $\omega\in \mathcal A_k\cap\mathcal B_k$
\begin{equation}\label{f1}
|\Lambda_n(z)|\le \frac{1}{4\sqrt y}.
\end{equation}
Note that
\begin{equation}
|\Lambda_n'(z)|\le \frac2{v^2},
\end{equation}
and
\begin{equation}
v_k-v_{k-1}=\frac{\sqrt y}{4n^2}.
\end{equation}
From this relation using Teilor's formula we get, for any $v_{k-1}\ge v\ge v_k$, and for $c_0\ge 2$,
\begin{equation}\label{f2}
|\Lambda_n(u+iv_{k-1})-\Lambda_n(u+iv)|\le \frac{4\sqrt y}{4n^2v_k^2}\le 
\frac{1}{\beta_n^8c_0^2\sqrt y}\le \frac1{4\sqrt y}.
\end{equation}
Inequalities \eqref{f1} and \eqref{f2} together imply for $\omega\in\mathcal A_{k-1}\cap\mathcal B_{k-1}$ and for any $z=u+iv$ with $v\in[v_k,v_{k-1}]$,
\begin{equation}
|\Lambda_n(u+iv)|\le \frac1{2\sqrt y}.
\end{equation}
Thus Lemma \ref{key} is proved.



\vskip 0.05cm
{\bf Acknowledgement.} The authors would like to thank S. G. Bobkov for
drawing their attention to some references about large deviations for martingales and quadratic forms.



\end{document}